\def \phi {\varphi}
\def \RN {\mathbb{R}^N}
\def \R {\mathbb{R}}
\def \G{\Gamma}
\def \vf{\varphi}
\newcommand{\Rn}{\mathbb R^n}
\newcommand{\Rm}{\mathbb R^m}
\newcommand{\Om}{\Omega}
\newcommand{\p}{\partial}
\newcommand{\la}{\lambda}
\newcommand{\vt}{\vartheta}
\numberwithin{equation}{section}
\newcommand{\beq}{\begin{equation}}
\newcommand{\bea}[1]{\begin{array}{#1} }
\newcommand{\eeq}{ \end{equation}}
\newcommand{\ea}{ \end{array}}
\newcommand{\sa}{\langle}
\newcommand{\da}{\rangle}
\newcommand{\C}{\mathbb{C}}
\newcommand{\Gi}{\mathscr G(X,t)}
\DeclareMathOperator{\sech}{sech}
\newtheorem{theorem}{Theorem}[section]
\newtheorem{lemma}[theorem]{Lemma}
\newtheorem{proposition}[theorem]{Proposition}
\newtheorem{remark}[theorem]{Remark}
\numberwithin{equation}{section}
\begin{document}

\title[On an evolution equation,  etc.]{On an evolution equation in sub-Finsler geometry}


\keywords{Evolution operators in sub-Finsler geometry. Fundamental solution. Minkowski gauges with mixed homogeneity}

\subjclass{35H20, 35B09, 35R03, 53C17, 58J60}

\date{}

\begin{abstract}
We study the gradient flow of an energy with mixed homogeneity which is at the interface of Finsler and sub-Riemannian geometry.
\end{abstract}

\author{Nicola Garofalo}

\address{Dipartimento d'Ingegneria Civile e Ambientale (DICEA)\\ Universit\`a di Padova\\ Via Marzolo, 9 - 35131 Padova,  Italy}
\vskip 0.2in
\email{nicola.garofalo@unipd.it}

\thanks{N. Garofalo is supported in part by a Progetto SID (Investimento Strategico di Dipartimento): ``Aspects of nonlocal operators via fine properties of heat kernels", University of Padova (2022); and by a PRIN (Progetto di Ricerca di Rilevante Interesse Nazionale) (2022): ``Variational and analytical aspects of geometric PDEs". He is also partially supported by a Visiting Professorship at the Arizona State University}

\maketitle

\tableofcontents

\section{Introduction}\label{S:intro}

Singular spaces occupy a prominent position in analysis and geometry. Examples of basic interest are Alexandrov spaces and Finsler manifolds. These latter ambients have received considerable attention over the past few decades as they often occur as scaling limits of crystalline or Riemannian structures, see \cite{Tay} and especially the seminal work \cite{BP}. In this paper we introduce a nonlinear evolution equation which represents the gradient flow of an energy which is at the interface of Finsler and sub-Riemannian geometry. Our main objective is to understand in detail the relevant heat kernel.  
To keep the presentation self-contained we confine ourselves to the model setting of a product of Euclidean spaces, i.e., $\RN= \Rm\times \R^k$, with coordinates $z\in \Rm$, $\sigma\in \R^k$. We assume that on each of the two layers, $\Rm$ and $\R^k$, Minkowski norms $\Phi$ and $\Psi$ have been assigned. By this we mean that
$\Phi^2\in C^{2}(\Rm\setminus\{0\})$, $\Psi^2\in C^{2}(\R^k\setminus\{0\})$, and that these functions are strictly convex and $1$-homogeneous. We respectively denote by $\Phi^0$ and $\Psi^0$ their dual norms defined as in \eqref{dual} below.
The levels sets of the functions $\Phi^0$ and $\Psi^0$ are often referred to as \emph{Wulff shapes}. This situation represents a simplified, yet significant, model for the more general one of a Riemannian manifold in which the tangent space is stratified in layers, each endowed with a different anisotropic structure, and having mixed homogeneities weighted according to the relative position in the stratification. 

We are interested in the $L^2$ gradient flow
\begin{equation}\label{gf}
\frac{\p f}{\p t} = - \frac{\p \mathscr E^{\Phi,\Psi}(f)}{\p f}
\end{equation}
of the following energy 
\begin{equation}\label{Fen}
\mathscr E^{\Phi,\Psi}(f) = \frac 12 \int_{\RN} \left(\Phi(\nabla_z f)^2 + \frac{\Phi^0(z)^{2}}4 \Psi(\nabla_\sigma f)^2\right) dzd\sigma.
\end{equation}
Notable features of \eqref{Fen} are:
\begin{itemize}
\item[(i)] The nonlinear dependence on the degenerate gradient
\begin{equation}\label{dg}
\nabla_X f = (X_1 f,...,X_m f, X_{m+1} f,...,X_N f)
\end{equation}
associated with the $N$ vector fields
\begin{equation}\label{vf}
X_i = \p_{z_i},\ i=1,...,m,\ \ \ X_{m+j} = \frac{\Phi^0(z)}2\  \p_{\sigma_j},\ j=1,...,k.
\end{equation}
(note from \eqref{vf} that for $i=1,...,N$ one has $X_i^\star = - X_i$  in $L^2(\RN)$. We also note that, under the given assumptions on $\Phi$ we have $\Phi^0\in C^1(\Rm\setminus\{0\})$, see \cite[Corollary 1.7.3]{Sc});
\item[(ii)] It degenerates along the manifold $M = \{0\}_z\times \R^k$, but because of the anisotropic nature of the dual norm $\Phi^0(z)$, it does so at different scales along regions of approach to $M$;
\item[(iii)] It is invariant with respect to the  family of mixed dilations in $\RN$
\begin{equation}\label{dil}
\delta_\la(z,\sigma) = (\la z, \la^2 \sigma),\ \ \ \ \ \ \ \la>0;
\end{equation}
\end{itemize}
Returning to \eqref{gf}, a standard argument shows that the relevant evolution PDE attached to \eqref{Fen} is 
\begin{equation}\label{pde}
\frac{\p f}{\p t} = \Delta_\Phi(f) + \frac{\Phi^0(z)^2}{4} \Delta_\Psi(f),
\end{equation}
where we have respectively denoted by  $\Delta_\Phi$ and $\Delta_\Psi$ the Finsler Laplacians in the spaces $\Rm$ and $\R^k$, see \eqref{EL} below. Furthermore, if with $\delta_\la$ as in \eqref{dil} we define 
\begin{equation}\label{bigdel}
\Delta_\la (z,\sigma,t) = (\delta_\la(z,\sigma),\la^2 t),
\end{equation}
then if $f$ solves \eqref{pde}, so does also $f\circ \Delta_\la$, for every $\la>0$.

We emphasise that \eqref{pde} is a nonlinear evolution equation with quadratic growth in the degenerate ``gradient"  \eqref{dg}. By this we mean that there exist  constants $\gamma, \gamma^\star>0$ such that
\begin{equation}\label{qg2}
\gamma\ |\nabla_X f|^2 \le \sa\mathscr A(\nabla_X f),\nabla_X f\da \le \gamma^{\star}\ |\nabla_X f|^2.
\end{equation} 
To explain \eqref{qg2}, note that \eqref{dg} and \eqref{vf} give
\begin{equation}\label{dg2}
\nabla_X f = \begin{pmatrix} \nabla_z f \\ \frac{\Phi^0(z)}2 \nabla_\sigma f\end{pmatrix}.
\end{equation}
Therefore, thanks to the $1$-homogeneity of $\Phi$ and $\Psi$, if we let 
\[
\mathscr A(\nabla_X f) = \begin{pmatrix} \Phi(\nabla_z f) \nabla \Phi(\nabla_z f) \\ \frac{\Phi^0(z)}2 \Psi(\nabla_\sigma f) \nabla \Psi(\eta)\end{pmatrix},
\]
then the PDE \eqref{pde} can be alternatively written as
\begin{equation}\label{pdealt}
\frac{\p f}{\p t} =  \sum_{i=1}^N X_i \mathscr A_i(\nabla_X f),
\end{equation}
where the functions $\mathscr A_i:\RN\to \R$ are the components of the vector field 
\[
\mathscr A(\nabla_X f) = (\mathscr A_1(\nabla_X f),...,\mathscr A_N(\nabla_X f))^T.
\]
 We now note that 
\begin{align}\label{qg}
\sa\mathscr A(\nabla_X f),\nabla_X f\da & = \Phi(\nabla_z f) \sa\nabla \Phi(\nabla_z f),\nabla_z f\da + \frac{\Phi^0(z)^2}4 \Psi(\nabla_\sigma f) \sa\nabla \Psi(\nabla_\sigma f),\nabla_\sigma f\da
\\
& = \Phi(\nabla_z f)^2 + \frac{\Phi^0(z)^2}4 \Psi(\nabla_\sigma f)^2,
\notag
\end{align}
where in the last equality we have used the $1$-homogeneity of $\Phi$ and $\Psi$, which is well-known to be equivalent to  the Euler equations 
\[
\sa\xi,\nabla \Phi(\xi)\da = \Phi(\xi),\ \ \ \ \ \ \sa\eta,\nabla \Psi(\eta)\da = \Psi(\eta).
\]
By the condition \eqref{ell} below, from \eqref{dg2} and \eqref{qg} we finally infer the existence of $\gamma, \gamma^\star>0$ such that \eqref{qg2} hold. Although the present work is not concerned with the local theory of the PDE \eqref{pde},  
we remark that the growth \eqref{qg2} would allow to apply the results in \cite{CCR}, once the relevant volume doubling condition and Poincar\'e inequality are available. For these aspects see the seminal works \cite{Gri}, \cite{SC}, and also the more recent paper \cite{CF} that develops the local theory in Finsler manifolds with Ricci lower bounds.
  
\medskip

To state our first result, we need to introduce some notation. Henceforth, points of $\Rm$ will be denoted by $z, \zeta$, etc., points of $\R^k$ by $\sigma, \tau, \la$, etc. We will use the notation $X = (z,\sigma), Y = (\zeta,\tau)$, etc. for points in the product space $\RN$. Since the notation $\nabla_X$ will no longer appear in this work, there will be no risk of confusion of this notation with the subscript in \eqref{qg2}, \eqref{dg2} and \eqref{qg} above. Also, we will respectively indicate with $\sigma_\Phi$ and $\sigma_\Psi$ the intrinsic measures of the Wulff spheres defined by
\begin{equation}\label{ws}
\sigma_\Phi = \int_{\{\Phi^0(z) = 1\}} \frac{dH^{m-1}(z)}{|\nabla \Phi^0(z)|},\ \ \ \ \ \sigma_\Psi = \int_{\{\Psi^0(\sigma) = 1\}} \frac{dH^{k-1}(\sigma)}{|\nabla \Psi^0(\sigma)|},
\end{equation} 
where $H^{m-1}$ and $H^{k-1}$ denote $(m-1)$-dimensional and $(k-1)$-dimensional Hausdorff measure in $\Rm$ and $\R^k$, respectively. Recall the classical formulas $\sigma_{m-1}= H^{m-1}(\mathbb S^{m-1}) = \frac{2\pi^{\frac m2}}{\G(\frac m2)}$, $\sigma_{k-1} = H^{k-1}(\mathbb S^{k-1}) = \frac{2\pi^{\frac k2}}{\G(\frac k2)}$. It is clear from \eqref{ws} that, in the linear isotropic case $\Phi(z) = |z|$, $\Psi(\sigma) = |\sigma|$, one has $\sigma_\Phi = \sigma_{m-1}$, $\sigma_\Psi = \sigma_{k-1}$. For a number $\nu\in \mathbb C$, we will denote with $J_\nu$ the Bessel function of the first kind and order $\nu$, and indicate by 
\begin{equation}\label{Gnu}
G_\nu(z) = z^{-\nu} J_\nu(z).
\end{equation}
We have the following.

\medskip

\begin{theorem}\label{T:main}
For every $X = (z,\sigma)\in \RN$ and $t>0$ the function
\begin{align}\label{FB}
\Gi & =  \frac{\sigma_{m-1}\sigma_{k-1}}{\sigma_\Phi \sigma_\Psi} \frac{(2\pi)^{-\frac k2} (4\pi)^{-\frac m2} }{t^{\frac m2+k}} \int_0^\infty  \left(\frac{u}{\sinh u}\right)^{\frac m2} e^{-\frac{u}{\tanh u} \frac{\Phi^0(z)^2}{4t}} G_{\frac k2 -1}(\frac{u \Psi^0(\sigma)}t)\ u^{k -1}\ du,
\end{align}
is a solution of the equation \eqref{pde}. Moreover, for every $t>0$ we have
\begin{equation}\label{Gint}
\int_{\RN} \Gi dX = 1.
\end{equation}
\end{theorem}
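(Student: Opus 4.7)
The guiding observation is that $\Gi$ depends on the spatial variables only through $r := \Phi^0(z)$ and $s := \Psi^0(\sigma)$, combined with the fact that the Finsler Laplacian $\Delta_\Phi$ acts on such Finsler-radial functions as the Euclidean radial Laplacian in dimension $m$: for $\phi \in C^2$,
\begin{equation*}
\Delta_\Phi[\phi(\Phi^0(z))] = \phi''(r) + \frac{m-1}{r}\phi'(r), \qquad r = \Phi^0(z),
\end{equation*}
and likewise for $\Delta_\Psi$. This identity is a consequence of Euler's homogeneity relations together with the duality relations $\Phi(\nabla \Phi^0) \equiv 1$ and $\nabla \Phi(\nabla \Phi^0(z)) = z/\Phi^0(z)$. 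Writing $\Gi = H(r,s,t)$, the PDE \eqref{pde} reduces to the Grushin radial equation
\begin{equation*}
\partial_t H = \partial_r^2 H + \frac{m-1}{r}\partial_r H + \frac{r^2}{4}\Bigl(\partial_s^2 H + \frac{k-1}{s}\partial_s H\Bigr),
\end{equation*}
which is precisely the Euclidean Grushin heat equation written in polar coordinates on each factor.

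The reduced equation would then be verified by differentiating under the integral sign. The Bessel equation, rewritten as $G_\nu''(z) + \frac{2\nu+1}{z}G_\nu'(z) + G_\nu(z) = 0$ for $G_\nu(z) = z^{-\nu}J_\nu(z)$, yields at $\nu = k/2-1$ the clean identity
\begin{equation*}
\Bigl(\partial_s^2 + \frac{k-1}{s}\partial_s\Bigr) G_{k/2-1}(us/t) = -\frac{u^2}{t^2} G_{k/2-1}(us/t).
\end{equation*}
Coupling this with direct computation of the $t$- and $r$-derivatives of the Gaussian factor, and using the elementary identity $\coth^2 u - 1 = \sinh^{-2} u$, the residual pointwise defect of the integrand can be rewritten as an exact derivative $\partial_u h(u,r,s,t)$. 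Integration in $u$ then closes the verification, provided the boundary contributions vanish at $u=0$ and $u=+\infty$; this is the Beals--Gaveau--Greiner mechanism, and the vanishing follows from the decay $(u/\sinh u)^{m/2} \to 0$ at infinity together with the power series expansion of $G_{k/2-1}$ near zero (recalling $k \ge 1$).

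For \eqref{Gint}, the coarea formula combined with the 1-homogeneity of $\Phi^0$ gives $\int_{\Rm} f(\Phi^0(z))\, dz = \sigma_\Phi \int_0^\infty f(r)\, r^{m-1}\, dr$, and analogously on $\R^k$. Substituting into $\int_{\RN} \Gi\, dX$, the prefactor $\sigma_{m-1}\sigma_{k-1}/(\sigma_\Phi \sigma_\Psi)$ in \eqref{FB} cancels exactly against the factor $\sigma_\Phi \sigma_\Psi$ produced by the change of variables, reducing the claim to the corresponding identity for the classical Euclidean Grushin heat kernel of $\Delta_z + (|z|^2/4) \Delta_\sigma$ on $\Rm \times \R^k$. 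The latter can be verified either by performing the Gaussian $r$-integral first (which produces a factor $(u\coth u)^{-m/2}$) and evaluating the remaining $(u,s)$-integral via standard Hankel transform identities, or by appealing to the stochastic completeness of the linear Grushin semigroup.

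The principal obstacle is the PDE verification in the second step: while the $s$-derivatives are absorbed cleanly by the Bessel equation, the leftover $(r,t,u)$-contributions are not pointwise zero, and the argument hinges on identifying the correct primitive $h(u,r,s,t)$ of the residual and controlling its endpoint behaviour. Beyond this calculational nucleus, the proof reduces to a systematic translation between the Finsler-radial setting and the classical isotropic Euclidean Grushin framework, where everything is known.
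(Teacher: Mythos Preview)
Your plan is correct and follows the same architecture as the paper: reduce to a function of $(r,s,t)=(\Phi^0(z),\Psi^0(\sigma),t)$ via Proposition~\ref{P:fin}, so that \eqref{pde} becomes the radial Grushin equation; then, for \eqref{Gint}, use the coarea identity (Lemma~\ref{L:radial}) to trade $\sigma_\Phi,\sigma_\Psi$ for $\sigma_{m-1},\sigma_{k-1}$ and reduce to the isotropic case, carrying out the Gaussian $r$-integral first and finishing with a Hankel/Fourier inversion.

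The one substantive difference is how the radial Grushin PDE itself is checked. The paper does not differentiate under the integral: it simply observes (Lemma~\ref{L:hankel}) that the integral in \eqref{FB} with $\Phi^0(z),\Psi^0(\sigma)$ replaced by $|z|,|\sigma|$ is exactly the Hankel representation of the known heat kernel $H(X,t)$ of the \emph{linear} operator $\Delta_z+\tfrac{|z|^2}{4}\Delta_\sigma$, quoted from \cite{GT} as Theorem~\ref{T:bg}. Since $H$ solves that linear PDE and is bi-radial, the function $F(r,s,t)$ automatically satisfies the radial equation, and the computation is over. Your proposed route---absorb the $s$-derivatives with the Bessel equation, then rewrite the $(r,t)$ residual as an exact $\partial_u$-derivative \`a la Beals--Gaveau--Greiner---also works and is more self-contained, but it reproves from scratch what the paper imports as a black box. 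In effect, the paper's proof is a one-line transfer principle (linear Grushin $\Rightarrow$ nonlinear Finsler--Grushin via Proposition~\ref{P:fin}), whereas yours reverifies the linear input along the way.
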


\begin{remark}\label{R:fs}
We emphasise  that Theorem \ref{T:frompartoel} below proves that, in fact, the function \eqref{FB} is a fundamental solution for \eqref{pde}.  
\end{remark}

\medskip

To state our second result, we mention that in the recent paper \cite{DGGS} the authors have studied in the product space $\RN$ the following degenerate energy with mixed homogeneity 
\begin{equation}\label{Fenp}
\mathscr E_{\alpha,p}(u) = \frac 1p \int_{\RN} \left(\Phi(\nabla_z u)^2 + \frac{\Phi^0(z)^{2\alpha}}4 \Psi(\nabla_\sigma u)^2\right)^{\frac p2} dzd\sigma,\ \ \ \ \ \ \ \ \ \ \ 1<p<\infty.
\end{equation}
It is clear that \eqref{Fen} corresponds to the special case $\alpha = 1$ and $p =2$ of \eqref{Fenp}. One of their main result is an explicit fundamental solution for the Euler-Lagrange equation of \eqref{Fenp}. To formulate the relevant result, consider the following anisotropic Minkowski gauge 
\begin{equation}\label{theta}
\Theta(z,\sigma) = \left(\Phi(z)^{2(\alpha+1)} + 4(\alpha+1)^2 \Psi(\sigma)^2\right)^{\frac{1}{2(\alpha+1)}}.
\end{equation}
In \cite{DGGS} a new Legendre transformation $\Theta^0$ was introduced, 
namely
\begin{equation}\label{theta0}
\Theta^0(z,\sigma)^{\alpha+1}= \underset{\Theta(\xi,\tau) = 1}{\sup}\ \bigg(|\sa z,\xi\da|^{\alpha+1}+ 4(\alpha+1)^2 \sa\sigma, \tau\da\bigg).
\end{equation}
The remarkable feature of \eqref{theta0} is underscored by the following result, established in \cite[Proposition 3.3]{DGGS}: 
\begin{equation}\label{theta00}
\Theta^0(z,\sigma)= \left(\Phi^0(z)^{2(\alpha +1)}+  4(\alpha+1)^2 \Psi^0(\sigma)^2  \right)^{\frac{1}{2(\alpha+1)}},
\end{equation}
where $\Phi^0$ and $\Psi^0$ are the classical dual Minkowski norms defined as in \eqref{dual} below. The reader should observe the perfect symmetry between \eqref{theta} and \eqref{theta00}. Furthermore, in \cite[Theorem 1.2]{DGGS} the authors proved that the function
\begin{equation}\label{wow}
\mathscr E_{\alpha,p}(z,\sigma) = \begin{cases}
C_{\alpha,p}\ \Theta^0(z,\sigma)^{-\frac{Q-p}{p-1}},\ \ \ \ \ \ \ \ p\not= Q,
\\
\\
C_\alpha\ \log \Theta^0(z,\sigma),\ \ \ \ \ \ \ \ \ \ \ \ p = Q,
\end{cases}
\end{equation}
is a fundamental solution, with pole in $(0,0)$,  of the Euler-Lagrange equation of \eqref{Fenp}. The number $Q$ in \eqref{wow} is given by
\begin{equation}\label{Q}
Q = Q_\alpha = m + (\alpha+1) k.
\end{equation}
It is clear from \eqref{wow} that such number plays the role of a dimension. In \eqref{wow}, the positive constants $C_{\alpha,p}$ and $C_\alpha$ are implicitly given, and they involve the Wulff shapes of the gauge $\Theta^0$. However, in order to fully understand the gradient flow of the energy \eqref{Fen}, it is important to have an explicit knowledge of the constants. For this reason, in Section \ref{S:em} we turn to this problem and in Lemma \ref{L:sigmas} we show that, with $\sigma_\Phi$ and $\sigma_\Psi$ as in \eqref{ws} above, then
\begin{equation}\label{sigmas0}
\sigma_{\alpha,p} = \sigma_\Phi \sigma_\Psi\ \frac{B(\frac{m+\alpha p}{2(\alpha+1)},\frac k2)}{2^{k+1} (\alpha+1)^k},
\end{equation}
where we have denoted by $B(x,y)$ Euler beta function, see \eqref{beta}. Since in \cite[Theorem 1.2]{DGGS} it was proved that
\begin{equation}\label{Cippy}
C_{\alpha,p} = \begin{cases} 
\frac{p-1}{Q-p} \left(\sigma_{\alpha,p}\right)^{-1/(p-1)},\ \ \ \ \ p\not= Q,
\\
\\
\sigma_{\alpha,Q}^{-1/(Q-1)},\ \ \ \ \ \ \ \ \ \ p = Q,
\end{cases}
\end{equation}
it is clear that \eqref{sigmas0} provides an expression of $C_{\alpha,p}$.
We explicitly note from \eqref{theta00}, \eqref{wow} and \eqref{Q} that, when $\alpha = 1$ and $p=2$, we have
\begin{equation}\label{theta01}
\Theta^0(z,\sigma)= \left(\Phi^0(z)^{4}+ 16 \Psi^0(\sigma)^2  \right)^{\frac{1}{4}},
\end{equation} 
and 
\begin{equation}\label{Q2}
Q = m + 2k.
\end{equation}
We also mention that the above theorem \eqref{wow} generalised a result first established in \cite{G} for the linear case $\Phi(z) = |z|$, $\Psi(\sigma) = |\sigma|$, when $p =2$.
 
To put our second result in some historical context, consider the standard Gauss-Weierstrass kernel in $\Rn$ (here, we are assuming $n\not= 2$)
\[
G(x,t) = (4\pi t)^{-\frac n2} e^{-\frac{|x|^2}{4t}},\ \ \ \ \ \ t>0.
\]
A well-known manifestation of the fact that $G(x,t)$ is a fundamental solution of the heat equation, and of the Bochner subordination principle, is that the $L^1_{loc}(\RN)$ function defined by   
\[
E(x) \overset{def}{=} \int_0^\infty G(x,t) dt,
\]
provides a fundamental solution of $-\Delta$ with pole in  $0\in \Rn$. Furthermore, an elementary, yet beautiful computation, shows that
\[
E(x) =  \frac{1}{(n-2)\sigma_{n-1}} |x|^{2-n}.
\]
The next theorem  shows that, surprisingly, despite its strongly nonlinear character, the evolution equation \eqref{pde} above displays the same  linear phenomenon as the standard heat equation. Furthermore, Theorem \ref{T:frompartoel} shows the remarkable fact that, had we not known the magic Minkowski gauge $\Theta^0$ in \eqref{theta01}, by running the nonlinear heat flow \eqref{gf} we are forced to discover it!

\medskip

\begin{theorem}\label{T:frompartoel}
Let $\Gi$ be the function defined in \eqref{FB} of Theorem \ref{T:main}. For every $t>0$, we have
\begin{equation}\label{beauty}
\int_0^\infty \Gi dt = \frac{C_{1,2}}{\Theta^0(z,\sigma)^{Q-2}},
\end{equation}
where $\Theta^0$ is as in \eqref{theta01}, $Q$ as in \eqref{Q2}, and $C_{1,2}$ is the constant, identified by \eqref{sigmas0}, and corresponding to the case $\alpha = 1$ and $p=2$  in \eqref{Cippy} above.  
As a consequence of \eqref{wow}, the function defined by the left-hand side of \eqref{beauty} is a fundamental solution of the nonlinear equation 
\begin{equation}\label{pdeti}
\Delta_\Phi(u) + \frac{\Phi^0(z)^2}4 \Delta_\Psi(u) = 0.
\end{equation}
This proves that $\Gi$ is not only a solution to \eqref{pde}, but in fact it is a fundamental solution. 
\end{theorem}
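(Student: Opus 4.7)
The plan is to evaluate $\int_0^\infty \mathscr G(X,t)\,dt$ by direct computation, match the resulting constant with $C_{1,2}$ in \eqref{Cippy}, and then invoke \eqref{wow} to conclude. Writing $A=\Phi^0(z)^2/4$ and $B=\Psi^0(\sigma)$, I would begin by exchanging the order of the $t$- and $u$-integrations (Fubini is justified for $A>0$ by the exponential decay of $e^{-uA/(t\tanh u)}$ in both variables and the boundedness of $G_{k/2-1}$ on $[0,\infty)$). After the substitution $s=1/t$, the inner $t$-integral becomes a Laplace transform of a Bessel function. Applying the classical identity
$$\int_0^\infty s^{\mu-1}e^{-as}J_\nu(bs)\,ds=\frac{(b/2)^\nu\,\Gamma(\mu+\nu)}{a^{\mu+\nu}\,\Gamma(\nu+1)}\,{}_2F_1\!\left(\tfrac{\mu+\nu}{2},\tfrac{\mu+\nu+1}{2};\nu+1;-\tfrac{b^2}{a^2}\right)$$
with $\mu=(m+k)/2$, $\nu=k/2-1$, $a=uA/\tanh u$, $b=uB$ (note $\mu+\nu=(Q-2)/2$ by definition of $Q$) yields a multiple of $(\tanh u)^{(Q-2)/2}A^{-(Q-2)/2}\,{}_2F_1((Q-2)/4,Q/4;k/2;-B^2\tanh^2 u/A^2)$.

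Next I would substitute $v=\tanh u$ and then $w=v^2$ in the remaining $u$-integral. The hyperbolic factors collapse via the identity $k-1-(Q-2)/2=-m/2$, which kills the surviving $u$-power from $(u/\sinh u)^{m/2}$, and the integral reduces to the Beta-type expression
$$\tfrac{1}{2}\int_0^1 w^{k/2-1}(1-w)^{m/4-1}\,{}_2F_1\!\left(\tfrac{Q-2}{4},\tfrac{Q}{4};\tfrac{k}{2};-\tfrac{B^2 w}{A^2}\right)dw.$$
Applying the Euler-type identity $\int_0^1 x^{c-1}(1-x)^{d-1}\,{}_2F_1(a,b;c;zx)\,dx=B(c,d)\,{}_2F_1(a,b;c+d;z)$ with $c=k/2$, $d=m/4$, $c+d=Q/4$, the resulting ${}_2F_1$ has $Q/4$ as both a top and the bottom parameter and so degenerates via ${}_2F_1(\alpha,\beta;\beta;z)=(1-z)^{-\alpha}$ to $(1+B^2/A^2)^{-(Q-2)/4}=A^{(Q-2)/2}(A^2+B^2)^{-(Q-2)/4}$. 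The factor $A^{(Q-2)/2}$ exactly cancels the $A^{-(Q-2)/2}$ from the previous step, and since $A^2+B^2=(\Phi^0(z)^4+16\Psi^0(\sigma)^2)/16=\Theta^0(z,\sigma)^4/16$ by \eqref{theta01}, the $(z,\sigma)$-dependence collapses precisely to $\Theta^0(z,\sigma)^{-(Q-2)}$.

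The final step is the bookkeeping of constants to recognize the prefactor as $C_{1,2}$ via \eqref{Cippy} and \eqref{sigmas0}. This uses Legendre's duplication formula twice, applied to $\Gamma(m/2)=\Gamma(2\cdot m/4)$ (which converts the ratio $\Gamma(m/4)/\Gamma(m/2)$ into $\sqrt\pi\,2^{1-m/2}/\Gamma((m+2)/4)$) and to $\Gamma(Q/2)=\Gamma(2\cdot Q/4)$ (which converts $\Gamma(Q/4)\Gamma(Q/4+1/2)$ into $\sqrt\pi\,2^{1-Q/2}\Gamma(Q/2)$), together with the elementary $(Q-2)\Gamma((Q-2)/2)=2\Gamma(Q/2)$; the resulting powers of $2$ line up exactly, identifying the constant with $C_{1,2}$. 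Once \eqref{beauty} is established, \eqref{wow} implies its right-hand side is a fundamental solution of \eqref{pdeti}, and, combined with the evolution equation for $\mathscr G$ and the normalization \eqref{Gint} from Theorem \ref{T:main}, the standard Bochner-subordination argument then upgrades $\mathscr G$ to a fundamental solution of \eqref{pde}. The main obstacle, and the striking feature of the argument, is the chain of hypergeometric reductions: the gauge $\Theta^0$ is not posited in advance but is \emph{forced} by the exact matching of parameters that triggers the degeneration ${}_2F_1(\alpha,\beta;\beta;z)=(1-z)^{-\alpha}$.
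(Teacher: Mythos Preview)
Your approach is essentially the same as the paper's: swap the $t$- and $u$-integrals, evaluate the $t$-integral as a Laplace--Bessel transform, substitute $w=\tanh^2 u$, apply the Bateman/Euler integral to shift the lower hypergeometric parameter from $k/2$ to $Q/4$, and then collapse ${}_2F_1(\alpha,\beta;\beta;z)=(1-z)^{-\alpha}$ to recover $\Theta^0(z,\sigma)^{-(Q-2)}$; the constant is then matched via Legendre duplication.

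The one genuine difference is that you quote the Laplace--Bessel identity in the form
\[
\int_0^\infty s^{\mu-1}e^{-as}J_\nu(bs)\,ds=\frac{(b/2)^\nu\,\Gamma(\mu+\nu)}{a^{\mu+\nu}\,\Gamma(\nu+1)}\;{}_2F_1\!\Bigl(\tfrac{\mu+\nu}{2},\tfrac{\mu+\nu+1}{2};\nu+1;-\tfrac{b^2}{a^2}\Bigr),
\]
whereas the paper uses Gegenbauer's version with argument $\beta^2/(\alpha^2+\beta^2)$ and second upper parameter $(1-\mu+\nu)/2=-m/4$, and must then apply a Kummer/Pfaff transformation \eqref{hyperG} to reach exactly your hypergeometric. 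Your form already has the Pfaff step built in, so you save one transformation and land directly on the integrand that is ready for the $w=\tanh^2 u$ substitution. This is a mild streamlining; the substance of the argument --- in particular the ``miracles'' that the $u$-powers cancel and that the Bateman shift produces matching upper and lower parameters $Q/4$ --- is identical to the paper's.
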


\medskip

 We close this introduction with a short description of the organisation of the present paper. In Section \ref{S:minkia} we collect some basic properties of Minkowski norms, and of the Finsler Laplacian and heat equation. Section \ref{S:em} is devoted to proving Proposition \ref{P:const}. In Section \ref{S:main} we prove Theorem \ref{T:main}. Finally, in Section \ref{S:beau} we establish Theorem  \ref{T:frompartoel}.


\section{Minkowski norms}\label{S:minkia}

Let $M:\Rn\to [0,\infty)$ be a Minkowski norm in $\Rn$. By this we mean that $M^2\in \C^{2}(\Rn\setminus\{0\})$ is a strictly convex function such that $M(\la x) = |\la| M(x)$ for every $x\in \Rn$ and $\la\in \R$. By strict convexity we mean that the Hessian matrix $\frac 12 \nabla^2(M^2)$ in positive definite in $\Rn\setminus\{0\}$. Since all norms in $\Rn$ are equivalent, there exist constants $\beta \ge \alpha>0$ such that
\begin{equation}\label{equinorm}
\alpha |\xi| \le M(\xi) \le \beta |\xi|.
\end{equation} 
We denote by 
\begin{equation}\label{dual}
M^0(x) = \underset{M(\xi)=1}{\sup}\ \sa x,\xi\da,
\end{equation}
its Legendre transform, also known as the dual norm of $M$. The Cauchy-Schwarz inequality trivially holds 
\begin{equation*}
|\sa x,y\da| \le M(x) M^0(y).
\end{equation*}
A basic property of the norms $M$ and $M^0$ is the following, see \cite[Lemma 2.1]{BP}, and also (3.12) in \cite{CS},
\begin{equation}\label{Finabla}
M(\nabla M^0(x)) = M^0(\nabla M(x))  = 1,\ \ \ \ \ \ \ \ x\in \Rn\setminus\{0\}.
\end{equation}
Given a function $u\in C^1(\Rn)$, an elementary, yet useful,  consequence of the homogeneity of $M$ is
\begin{equation}\label{hom}
\sa \nabla M(\nabla u(x)),\nabla u(x)\da = M(\nabla u(x)).
\end{equation}
A less obvious basic fact is the following formula, which can be found in \cite[Lemma 2.2]{BP}.

\begin{lemma}\label{L:BP}
For every $x\in \Rn\setminus\{0\}$, one has
\[
M^0(x) \nabla M(\nabla M^0(x)) = x,\ \ \ \ \ \ M(x) \nabla M^0(\nabla M(x)) = x. 
\]
\end{lemma}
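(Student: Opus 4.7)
My plan is to prove the first identity $M^0(x)\,\nabla M(\nabla M^0(x)) = x$ by a Lagrange multiplier argument applied to the variational definition \eqref{dual} of $M^0$, and then to deduce the second identity by appeal to the biduality $(M^0)^0=M$.

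First I would identify $\xi_\star := \nabla M^0(x)$ as the unique maximiser in the definition $M^0(x) = \sup_{M(\xi)=1}\sa x,\xi\da$. Indeed, \eqref{Finabla} gives $M(\xi_\star)=1$, so $\xi_\star$ is admissible, while the $1$-homogeneity of $M^0$ and Euler's identity yield $\sa x,\nabla M^0(x)\da = M^0(x)$; hence $\xi_\star$ attains the supremum. Uniqueness comes from the fact that strict convexity of $M^2$ makes the sublevel set $\{M\le 1\}$ strictly convex, so the linear functional $\xi\mapsto \sa x,\xi\da$ attains its maximum on the unit $M$-sphere at a single point.

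Next, at this maximiser the Lagrange multiplier rule produces a scalar $\mu\in\R$ with $x = \mu\,\nabla M(\xi_\star)$. Pairing both sides with $\xi_\star$ gives, on the left, $\sa x,\xi_\star\da = M^0(x)$ from the previous step, and on the right, using \eqref{hom} applied to the linear function $y\mapsto \sa \xi_\star,y\da$ (so that $\nabla u\equiv \xi_\star$), the value $\mu\,M(\xi_\star) = \mu$. Therefore $\mu = M^0(x)$, and rewriting $x = \mu\,\nabla M(\xi_\star)$ yields exactly $M^0(x)\,\nabla M(\nabla M^0(x)) = x$, the first identity.

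For the second identity $M(x)\,\nabla M^0(\nabla M(x))=x$ I would invoke the biduality $(M^0)^0 = M$, a standard property of strictly convex $1$-homogeneous Minkowski norms which is guaranteed by the hypotheses placed on $M$; applying the first identity with the roles of $M$ and $M^0$ interchanged then gives the claim. The main subtlety I expect is precisely the uniqueness of the Lagrange maximiser: without it, the relation $x=\mu\,\nabla M(\xi_\star)$ would not allow us to identify $\xi_\star$ with $\nabla M^0(x)$, but only with some element of the argmax. Strict convexity of $M^2$, built into the definition of Minkowski norm in this section, is precisely what removes this ambiguity and makes the argument self-contained within the tools \eqref{dual}, \eqref{Finabla} and \eqref{hom} already established.
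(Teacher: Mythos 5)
Your proof is correct, and it is worth noting that the paper does not prove Lemma \ref{L:BP} at all: it simply cites \cite[Lemma 2.2]{BP}, so there is no in-paper argument to compare against. Your Lagrange-multiplier route is a reasonable self-contained proof. One small observation: the uniqueness of the maximiser, which you flag as ``the main subtlety,'' is not actually needed for the logic to close. Once you have verified that $\xi_\star := \nabla M^0(x)$ is admissible (via \eqref{Finabla}) and attains the supremum (via Euler's identity for $M^0$), you know $\xi_\star$ is \emph{a} maximiser, and the Lagrange multiplier rule is a necessary condition at \emph{every} maximiser where the constraint qualification $\nabla M(\xi_\star)\neq 0$ holds. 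Pairing $x=\mu\nabla M(\xi_\star)$ with $\xi_\star$ then pins down $\mu=M^0(x)$ regardless of whether the argmax is a singleton. So uniqueness is a true byproduct of the strict convexity hypothesis but plays no load-bearing role; invoking it is harmless but superfluous. For the second identity your biduality step $(M^0)^0=M$ is standard; do note that running the first argument with the roles of $M$ and $M^0$ swapped uses that $M^0\in C^1(\Rn\setminus\{0\})$ (so that $\nabla M^0$, Euler's identity for $M^0$, and the constraint qualification on $\{M^0=1\}$ all make sense), a fact the paper records for $\Phi^0$ via \cite[Corollary 1.7.3]{Sc}. Since you do not need uniqueness, you also do not need to prove strict convexity of $(M^0)^2$, which keeps the swapped argument light.
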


The Euler-Lagrange equation of the energy
\begin{equation}\label{finen}
\mathscr E_{M}(u) = \frac 12 \int M(\nabla u)^2 dx
\end{equation}
is the so-called Finsler Laplacian
\begin{equation}\label{EL}
\Delta_M(u) = \operatorname{div}(M(\nabla u)\nabla M(\nabla u)) = 0.
\end{equation}
It is worth emphasising here that the operator in \eqref{EL} is quasilinear, but not linear, unless of course $M(x) = |x|$. However, the operator $\Delta_M$ is elliptic. In fact, since $M^2$ is homogeneous of degree $2$, Euler formula gives
\[
\sa\nabla(M^2)(\xi),\xi\da = M(\xi)^2,
\]
and from \eqref{equinorm} we thus have for every $\xi\in \Rn$
\begin{equation}\label{ell}
\alpha^2 |\xi|^2 \le \sa\nabla(M^2)(\xi),\xi \da \le \beta^2 |\xi|^2.
\end{equation}
An important property of the dual norm is the following result which follows from \eqref{Finabla} and from Lemma \ref{L:BP}, see \cite{CS} and \cite{FK}.

\begin{proposition}\label{P:fin}
Consider the function 
\[
\psi(x) = M^0(x).
\]
Then if $k\in C^2(\R)$, and $v = k\circ \psi$, one has in $\Rn\setminus\{0\}$
\[
\Delta_M(v) = k''(\psi) + \frac{n-1}\psi k'(\psi).
\]
\end{proposition}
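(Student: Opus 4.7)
The plan is to reduce the problem to a straightforward divergence computation by exploiting the homogeneity of $M$ and $M^0$, together with the two identities \eqref{Finabla} and Lemma \ref{L:BP}. First I would apply the chain rule to $v = k \circ \psi$ to obtain $\nabla v(x) = k'(\psi) \nabla M^0(x)$. The key observation is that the vector field appearing in the Finsler Laplacian can be rewritten as
\[
M(\nabla v)\nabla M(\nabla v) = \tfrac12 \nabla(M^2)(\nabla v),
\]
and since $M^2$ is $2$-homogeneous on $\Rn$, $\nabla(M^2)$ is genuinely $1$-homogeneous (for all scalars, not just positive ones). Therefore
\[
\tfrac12 \nabla(M^2)\bigl(k'(\psi) \nabla M^0(x)\bigr) = k'(\psi)\, M(\nabla M^0(x))\, \nabla M(\nabla M^0(x)).
\]

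Next I would use \eqref{Finabla} to conclude $M(\nabla M^0(x)) = 1$, and then Lemma \ref{L:BP}, which gives $\psi\, \nabla M(\nabla M^0(x)) = x$, to identify $\nabla M(\nabla M^0(x)) = x/\psi$. Combining these pieces collapses the entire quasilinear vector field into the explicit radial form
\[
M(\nabla v)\nabla M(\nabla v) = \frac{k'(\psi)}{\psi}\, x.
\]
At this point the nonlinearity has disappeared and the problem reduces to computing the ordinary Euclidean divergence of a radial-type vector field.

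To finish, I would compute $\Delta_M(v) = \operatorname{div}\!\left(\frac{k'(\psi)}{\psi} x\right)$ using the product rule, together with the Euler identity $\sa\nabla M^0(x), x\da = M^0(x) = \psi$ (a consequence of the $1$-homogeneity of $M^0$). Writing $g(\psi) = k'(\psi)/\psi$ gives
\[
\operatorname{div}(g(\psi) x) = g'(\psi)\,\sa \nabla\psi, x\da + n\,g(\psi) = g'(\psi)\,\psi + n\,g(\psi),
\]
and expanding $g'(\psi) = k''(\psi)/\psi - k'(\psi)/\psi^2$ yields exactly $k''(\psi) + \frac{n-1}{\psi} k'(\psi)$, as claimed. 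The only subtle point worth double-checking is that the reduction in the first paragraph is valid regardless of the sign of $k'(\psi)$; this is why it is essential to pass through $\nabla(M^2)$ rather than work directly with $M \nabla M$, since $M$ itself is only $1$-homogeneous with respect to the absolute value. Once that is handled, the rest of the argument is a one-line computation.
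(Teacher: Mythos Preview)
Your proof is correct and uses precisely the two ingredients the paper indicates (equation \eqref{Finabla} and Lemma \ref{L:BP}); note that the paper does not actually write out a proof but simply attributes the result to \cite{CS} and \cite{FK}. Your care in passing through $\tfrac12\nabla(M^2)$ to handle the sign of $k'(\psi)$ is exactly the right way to make the argument clean, and the final divergence computation is correct.
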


A remarkable consequence of Proposition \ref{P:fin} is that the nonlinear operator $\Delta_M$ acts linearly on functions of the dual norm $M^0$. 
The Finsler heat equation arises from the gradient flow of the energy \eqref{finen}. It is the quasilinear PDE in $\Rn\times \R$
\begin{equation}\label{Mhe}
\p_t u = \Delta_M(u).
\end{equation}
In the framework of Finsler manifolds, an in-depth study of \eqref{Mhe} has been carried in the works \cite{OScpam}, \cite{OSaim}, see also \cite{AIS} for the case of $\Rn$. Using Proposition \ref{P:fin} it is possible to construct the following notable explicit solution of the Finsler heat equation, see \cite[Example 4.3]{OScpam}, and also \cite{AIS}.

\begin{proposition}\label{P:heat}
The function
\[
G(x,t) = t^{-\frac n2} e^{-\frac{M^0(x)^2}{4t}},\ \ \ \ \ \ \ \ \ x\in \Rn, t>0,
\]
solves the heat equation in $\Rn\times (0,\infty)$, i.e., 
\[
\p_t G - \Delta_M(G) = 0.
\]
\end{proposition}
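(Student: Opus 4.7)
The plan is to reduce the computation to Proposition \ref{P:fin} by recognizing that, for each fixed $t>0$, the function $G(\cdot,t)$ depends on $x$ only through the dual norm $\psi(x) = M^0(x)$. Concretely, I would set
\[
k_t(s) = t^{-\frac n2} e^{-\frac{s^2}{4t}}, \qquad s\in\R,
\]
so that $G(x,t) = k_t(\psi(x))$ on $\R^n\setminus\{0\}$, where $\psi\in C^2(\R^n\setminus\{0\})$. Since $k_t\in C^2(\R)$, Proposition \ref{P:fin} applies and yields
\[
\Delta_M G(\cdot,t) \;=\; k_t''(\psi) \;+\; \frac{n-1}{\psi}\,k_t'(\psi)
\]
on $\R^n\setminus\{0\}$.

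The second step is a direct computation. Differentiating gives $k_t'(s) = -\tfrac{s}{2t}\,k_t(s)$ and
\[
k_t''(s) \;=\; \left(\frac{s^2}{4t^2} - \frac{1}{2t}\right) k_t(s).
\]
Substituting into the formula above and collecting terms produces
\[
\Delta_M G \;=\; \left(\frac{\psi^2}{4t^2} \;-\; \frac{1}{2t} \;-\; \frac{n-1}{2t}\right) G \;=\; \left(\frac{M^0(x)^2}{4t^2} \;-\; \frac{n}{2t}\right) G.
\]
On the other hand, differentiating $G(x,t) = t^{-n/2} e^{-M^0(x)^2/(4t)}$ in $t$ directly yields
\[
\p_t G \;=\; \left(-\frac{n}{2t} \;+\; \frac{M^0(x)^2}{4t^2}\right) G,
\]
which agrees with $\Delta_M G$, establishing $\p_t G - \Delta_M G = 0$.

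There is essentially no obstacle in this argument, because Proposition \ref{P:fin} has already done the conceptual work of showing that $\Delta_M$ behaves linearly on radial functions of $M^0$, mirroring the classical Euclidean identity for $v=k\circ|x|$. The only subtle point is that the identity is obtained classically on $(\R^n\setminus\{0\})\times(0,\infty)$, since $M^0$ is only $C^1$ (with $(M^0)^2$ of class $C^2$) away from the origin; this is the natural domain where the Finsler Laplacian $\Delta_M$ makes pointwise sense for this $G$. For $t>0$, $G$ is continuous up to $x=0$ and one can interpret the equation distributionally on all of $\R^n\times(0,\infty)$, but this is not needed for the stated conclusion.
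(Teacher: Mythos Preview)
Your proof is correct and follows precisely the approach the paper suggests: the paper does not give a formal proof of this proposition but indicates that it follows by applying Proposition~\ref{P:fin} to the radial profile of $G$ in the dual norm, which is exactly what you carry out. The computations of $k_t'$, $k_t''$, $\Delta_M G$, and $\partial_t G$ are all accurate, and your remark on the classical domain $(\Rn\setminus\{0\})\times(0,\infty)$ is appropriate.
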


We leave it as an exercise to the reader to verify that the function $G(x,t)$ satisfies the following Proposition \ref{P:LY}, the Finsler counterpart of the extremal case of the famous result in \cite{LY}. A Li-Yau theory in Finsler manifolds was developed in the cited work \cite{OSaim}. In connection with the present work, a version of the theory in \cite{BG} will be presented in a forthcoming article.

\begin{proposition}\label{P:LY}
The following identity is true in $\Rn\times (0,\infty)$
\[
M(\nabla \log G)^2 - \p_t(\log G) = \frac{n}{2t}.
\]
\end{proposition}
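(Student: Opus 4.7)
The statement reduces to a direct computation, so my plan is essentially to evaluate both terms on the left-hand side explicitly and observe the cancellation. Write $\log G(x,t) = -\frac{n}{2}\log t - \frac{M^0(x)^2}{4t}$. The time derivative is then immediate:
\[
\partial_t(\log G) = -\frac{n}{2t} + \frac{M^0(x)^2}{4t^2}.
\]

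The only nontrivial ingredient is the spatial piece. First I would compute
\[
\nabla_x \log G = -\frac{1}{4t}\,\nabla\!\left(M^0(x)^2\right) = -\frac{M^0(x)}{2t}\,\nabla M^0(x),
\]
valid on $\Rn\setminus\{0\}$ since $M^0$ is $C^1$ away from the origin. Because $M$ is absolutely $1$-homogeneous and $M^0(x)/(2t) \ge 0$, I can pull the scalar out to obtain
\[
M\bigl(\nabla_x \log G\bigr) = \frac{M^0(x)}{2t}\,M\bigl(\nabla M^0(x)\bigr).
\]
At this point the key input is the duality identity \eqref{Finabla}, namely $M(\nabla M^0(x)) = 1$ for $x\neq 0$, which gives $M(\nabla_x\log G)^2 = M^0(x)^2/(4t^2)$.

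Subtracting $\partial_t\log G$ yields the desired identity $n/(2t)$; the $M^0(x)^2/(4t^2)$ contributions cancel exactly, leaving only the $n/(2t)$ term. The case $x = 0$ causes no trouble because both sides extend continuously (or one may simply note that the identity holds on the dense open set $\Rn\setminus\{0\}\times(0,\infty)$). There is no real obstacle in this argument — the only place where the Finsler structure enters in a nontrivial way is the normalization $M(\nabla M^0) = 1$, which is precisely the content of \eqref{Finabla} that was recorded for this purpose.
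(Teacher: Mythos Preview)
Your computation is correct and is exactly the intended verification; the paper itself leaves this proposition as an exercise, so there is no alternative proof to compare against. The only point worth noting is that when you pull the scalar $-M^0(x)/(2t)$ out of $M$, you are using not just positive homogeneity but the evenness of $M$ (i.e., $M(\lambda\xi)=|\lambda|M(\xi)$), which is part of the standing hypothesis on Minkowski norms in the paper.
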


Using the coarea formula one can easily show that
\begin{align}\label{c0}
& \int_{\Rn} G(x,t) dx = 2^{n-1} \G(n/2) \sigma_M,
\end{align}
where we have let
\begin{equation}\label{sphere}
\sigma_M = \int_{\{M^0(x) = 1\}} \frac{dH^{n-1}(x)}{|\nabla M^0(x)|}.
\end{equation}
Notice that, on the one hand the coarea formula gives
\begin{equation}\label{wulffball}
\operatorname{Vol}_n(\{x\in \Rn\mid \Phi^0(x)<r\}) = \int_0^r \int_{\{M^0(x) = s\}} \frac{dH^{n-1}(x)}{|\nabla M^0(x)|} ds,
\end{equation}
and therefore
\begin{equation}\label{derwulffball}
\frac{d}{dr} \operatorname{Vol}_n(\{x\in \Rn\mid \Phi^0(x)<r\}) = \int_{\{M^0(x) = r\}} \frac{dH^{n-1}(x)}{|\nabla M^0(x)|}.
\end{equation}
On the other hand, a rescaling gives
\begin{equation}\label{wulffball2}
\operatorname{Vol}_n(\{x\in \Rn\mid \Phi^0(x)<r\}) = \omega_M\ r^n,
\end{equation}
and therefore
\begin{equation}\label{derwulffball2}
\frac{d}{dr} \operatorname{Vol}_n(\{x\in \Rn\mid \Phi^0(x)<r\}) = n \omega_M\ r^{n-1}.
\end{equation}
Equating \eqref{derwulffball} and \eqref{derwulffball2}, we infer
\begin{equation}\label{wulffsphere}
\int_{\{M^0(x) = r\}} \frac{dH^{n-1}(x)}{|\nabla M^0(x)|} = n \omega_M\ r^{n-1},\ \ \ \ \ \ r>0.
\end{equation}
In particular, when $r=1$ we obtain from \eqref{sphere} and \eqref{wulffsphere}
\begin{equation}\label{sm}
\sigma_M = n \omega_M.
\end{equation}

\begin{remark}\label{R:sm}
The identity \eqref{sm} can also be directly obtained by the well-known formula of Minkowski for the volume, which gives
\begin{equation}\label{minkio}
\omega_M = \frac 1n \int_{\{M^0(x) = 1\}} \sa x,\nu\da dH^{n-1}(x).
\end{equation}
If we now use the first identity in Lemma \ref{L:BP}, the fact that $\nu = \frac{\nabla M^0}{|\nabla M^0|}$, and Euler formula $\sa\nabla M(p),p\da = M(p)$, applied with $p = \nabla M^0(x)$,  we find on $\{M^0(x) = 1\}$
\[
\sa x,\nu\da = M^0(x) \sa\nabla M(\nabla M^0(x)),\frac{\nabla M^0(x)}{|\nabla M^0(x)|}\da = \frac{M(\nabla M^0(x))}{|\nabla M^0|} = \frac{1}{|\nabla M^0|},
\] 
where in the last equality we have used \eqref{Finabla}. Substituting in \eqref{minkio}, and using \eqref{sphere}, we obtain \eqref{sm}.
\end{remark}

We will need the following useful observation.

\begin{lemma}\label{L:radial}
Suppose $f(x) = f^\star(|x|)$, for some measurable function $f^\star:[0,\infty)\to \R$, and consider the function $F(x) = f^\star(M^0(x))$. Then
\[
\int_{\Rn} F(x) dx = \frac{\sigma_M}{\sigma_{n-1}} \int_{\Rn} f(x) dx,
\]
where as customary $\sigma_{n-1} = \frac{2\pi^{\frac n2}}{\G(n/2)}$.
\end{lemma}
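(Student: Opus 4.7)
The plan is to reduce both sides to a common one-dimensional integral by applying the coarea formula, using $M^0$ as the slicing function on the left and the ordinary Euclidean norm $|\cdot|$ on the right.

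First I would slice $\int_{\Rn} F(x)\,dx$ along the level sets of $M^0$. Since $F(x) = f^\star(M^0(x))$, the coarea formula gives
\[
\int_{\Rn} F(x)\,dx = \int_0^\infty f^\star(r) \int_{\{M^0(x)=r\}} \frac{dH^{n-1}(x)}{|\nabla M^0(x)|}\,dr.
\]
Here the identity $M^0 \in C^1(\Rn\setminus\{0\})$ and the fact that $|\nabla M^0|$ is bounded away from zero on each level set justify the application.

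Next I would invoke \eqref{wulffsphere}, which identifies the inner integral exactly as $n\omega_M\,r^{n-1} = \sigma_M\,r^{n-1}$, where the last equality is the relation \eqref{sm}. Substituting yields
\[
\int_{\Rn} F(x)\,dx = \sigma_M \int_0^\infty f^\star(r)\,r^{n-1}\,dr.
\]

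Finally, standard polar coordinates in $\Rn$ applied to the radial function $f(x) = f^\star(|x|)$ give
\[
\int_{\Rn} f(x)\,dx = \sigma_{n-1} \int_0^\infty f^\star(r)\,r^{n-1}\,dr,
\]
and dividing these two expressions produces the claimed identity. There is no real obstacle here: the lemma is essentially a restatement of the Wulff-sphere polar coordinate formula \eqref{wulffsphere}, and the only care needed is the observation that one can pull the radial weight $f^\star(r)$ outside the inner Hausdorff integral because $f^\star(M^0(x)) = f^\star(r)$ is constant on each Wulff sphere $\{M^0 = r\}$.
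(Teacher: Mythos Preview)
Your proposal is correct and follows essentially the same argument as the paper: apply the coarea formula with $M^0$ as the slicing function, invoke \eqref{wulffsphere} (together with \eqref{sm}) to evaluate the inner Hausdorff integral as $\sigma_M\, r^{n-1}$, and then recognise the remaining one-dimensional integral as $\sigma_{n-1}^{-1}\int_{\Rn} f(x)\,dx$ via standard polar coordinates.
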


\begin{proof}
The coarea formula and \eqref{wulffsphere} give
\begin{align*}
\int_{\Rn} F(x) dx & = \int_0^\infty f^\star(r) \int_{\{M^0(x) = r\}} \frac{dH^{n-1}(x)}{|\nabla M^0(x)|} dr
\\
& = \sigma_M \int_0^\infty f^\star(r) r^{n-1} dr = \frac{\sigma_M}{\sigma_{n-1}} \int_{\Rn} f(x) dx.
\end{align*}

\end{proof}


\section{Equalities matter}\label{S:em}

\medskip

This section is devoted to the explicit computation of the constants $C_{\alpha,p}$ and $C_\alpha$ in the above cited \eqref{wow} from \cite[Theorem 1.2]{DGGS}. The main result of the section is Proposition \ref{P:const}. This result plays a critical role in recognising that the function $\Gi$ introduced in \eqref{FB} of Theorem \ref{T:main} above, is in fact, a fundamental solution of the PDE \eqref{pde}.  Throughout this section, $\RN = \Rm\times \R^k$, and $\Phi$ and $\Psi$ will be strictly-convex Minkowski norms in $\Rm$ and $\R^k$, respectively. We recall from \cite{DGGS} that, with $\Theta^0(z,\sigma)$ given by \eqref{theta00} above, the constants in \eqref{wow} are prescribed by the  formulas \eqref{Cippy}, 
where, with $Q$ as in \eqref{Q} above, we have let
\begin{equation}\label{sigmas}
\sigma_{\alpha,p} = Q \int_{\{\Theta^0(z,\sigma) <1\}} \left(\frac{\Phi^0(z)}{\Theta^0(z,\sigma)}\right)^{\alpha p} dz d\sigma.
\end{equation}
If we now consider the Wulff ball of the anisotropic gauge 
\[
\mathscr K = \{(z,\sigma)\in \RN\mid \Theta^0(z,\sigma)<1\},
\]
and set
\[
\mathscr K^\star = \{(r,s)\in [0,\infty)\times [0,\infty)\mid r^{2(\alpha+1)} + 4(\alpha+1)^2 s^2 <1\},
\]
then it should be clear to the reader that, if we define
\begin{equation}\label{fpiccola}
f^\star(r,s) = \frac{r^{\alpha p}}{\left(r^{2(\alpha+1)} + 4(\alpha+1)^2 s^2\right)^{\frac{\alpha p}{2(\alpha+1)}}}\ \mathbf 1_{\mathscr K^\star}(r,s),
\end{equation}
where we have denoted by $\mathbf 1_{\mathscr K^\star}$ the indicator function of the set $\mathscr K^\star$, then we have
\begin{equation}\label{fgrande}
F(z,\sigma) = \left(\frac{\Phi^0(z)}{\Theta^0(z,\sigma)}\right)^{\alpha p} \mathbf 1_{\mathscr K}(z,\sigma) = f^\star(\Phi^0(z),\Psi^0(\sigma)).
\end{equation}
 
We now state a generalisation of Lemma \ref{L:radial}, whose proof we leave to the reader.

\begin{lemma}\label{L:doubleradial}
Let $f^\star:[0,\infty)\times [0,\infty)\to \overline \R$ be a measurable function, and let 
\[
F(z,\sigma) = f^\star(\Phi^0(z),\Psi^0(\sigma)),\ \ \ \ \ \ \ \ f(z,\sigma) = f^\star(|z|,|\sigma|).
\]
Then
\begin{equation}\label{drad}
\int_{\RN} F(z,\sigma) dz d\sigma = \frac{\sigma_\Phi \sigma_\Psi}{\sigma_{m-1}\sigma_{k-1}} \int_{\RN} f(z,\sigma) dz d\sigma.
\end{equation}
Furthermore, one has
\begin{equation}\label{drad2}
\int_{\RN} f(z,\sigma) dz d\sigma = \sigma_{m-1}\sigma_{k-1} \int_0^\infty \int_0^\infty f^\star(r,s) r^{m-1} s^{k-1} dr ds.
\end{equation}
Therefore, by combining \eqref{drad} and \eqref{drad2}, we find
\begin{equation}\label{drad3}
\int_{\RN} F(z,\sigma) dz d\sigma = \sigma_\Phi \sigma_\Psi \int_0^\infty \int_0^\infty f^\star(r,s) r^{m-1} s^{k-1} dr ds.
\end{equation}
\end{lemma}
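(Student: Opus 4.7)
The plan is to reduce Lemma \ref{L:doubleradial} to two applications of the one-variable statement Lemma \ref{L:radial}, separated by Fubini's theorem, together with the standard polar coordinate computation in each Euclidean factor.

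For \eqref{drad}, I would begin by writing the left-hand side as an iterated integral
\[
\int_{\RN} F(z,\sigma)\, dz\, d\sigma = \int_{\R^k} \left(\int_{\Rm} f^\star(\Phi^0(z),\Psi^0(\sigma))\, dz\right) d\sigma,
\]
which is justified by Fubini's theorem (with the usual caveat that one works first with $|f^\star|$ when checking integrability). For each fixed $\sigma\in \R^k$, set $h_\sigma(r) = f^\star(r,\Psi^0(\sigma))$, so that the inner integrand becomes $h_\sigma(\Phi^0(z))$. Now I apply Lemma \ref{L:radial} in $\Rm$ with Minkowski norm $\Phi$ to obtain
\[
\int_{\Rm} h_\sigma(\Phi^0(z))\, dz = \frac{\sigma_\Phi}{\sigma_{m-1}} \int_{\Rm} h_\sigma(|z|)\, dz = \frac{\sigma_\Phi}{\sigma_{m-1}} \int_{\Rm} f^\star(|z|,\Psi^0(\sigma))\, dz.
\]
I then insert this into the iterated integral, swap the order of integration, and repeat the procedure with respect to $\sigma$: for each fixed $z\in \Rm$ the map $\sigma\mapsto f^\star(|z|,\Psi^0(\sigma))$ is of the form $g_z(\Psi^0(\sigma))$ with $g_z(s) = f^\star(|z|,s)$, so a second application of Lemma \ref{L:radial}, this time in $\R^k$ with norm $\Psi$, produces the factor $\sigma_\Psi/\sigma_{k-1}$ and replaces $\Psi^0(\sigma)$ by $|\sigma|$. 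A final use of Fubini to put the two Euclidean integrations back together yields \eqref{drad}.

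For \eqref{drad2}, the function $f$ depends only on $(|z|,|\sigma|)$, so standard polar coordinates applied independently in $\Rm$ and in $\R^k$ give
\[
\int_{\RN} f(z,\sigma)\, dz\, d\sigma = \int_0^\infty\!\!\int_0^\infty f^\star(r,s)\left(\int_{\mathbb{S}^{m-1}} dH^{m-1}\right)\!\left(\int_{\mathbb{S}^{k-1}} dH^{k-1}\right) r^{m-1} s^{k-1}\, dr\, ds,
\]
and the two sphere integrals produce exactly $\sigma_{m-1}$ and $\sigma_{k-1}$. Finally, \eqref{drad3} follows by substituting \eqref{drad2} into \eqref{drad}.

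There is no genuine obstacle here: the content is bookkeeping and invoking the already established Lemma \ref{L:radial} one factor at a time. The only place where a mild care is needed is the justification of Fubini; this is handled in the standard way by first applying the argument to $|f^\star|$ and observing that if the right-hand side of \eqref{drad3} is finite then so is each iterated integral appearing along the way, after which the signed case follows.
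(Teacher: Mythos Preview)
Your proof is correct and is precisely the argument the paper has in mind: the lemma is explicitly introduced as ``a generalisation of Lemma \ref{L:radial}, whose proof we leave to the reader,'' and applying Lemma \ref{L:radial} once in each factor via Fubini, followed by standard polar coordinates for \eqref{drad2}, is the intended route.
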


If we now apply \eqref{drad3}, keeping \eqref{sigmas}, \eqref{fpiccola} and \eqref{fgrande} in mind, we reach the following conclusion.
\begin{lemma}\label{L:fgo}
For every $\alpha, m, k, >0$, with $Q$ given as in \eqref{Q}, we have 
\[
\sigma_{\alpha,p} = \sigma_\Phi \sigma_\Psi\ Q \int_{\mathscr K^\star} \frac{r^{\alpha p}}{\left(r^{2(\alpha+1)} + 4(\alpha+1)^2 s^2\right)^{\frac{\alpha p}{2(\alpha+1)}}}  r^{m-1} s^{k-1} dr ds.
\]
\end{lemma}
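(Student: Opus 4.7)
The plan is to apply formula \eqref{drad3} of Lemma \ref{L:doubleradial} directly to the integral defining $\sigma_{\alpha,p}$ in \eqref{sigmas}. First I would observe that, by the explicit representation \eqref{theta00} of the dual anisotropic gauge $\Theta^0$, the integrand
\[
\left(\frac{\Phi^0(z)}{\Theta^0(z,\sigma)}\right)^{\alpha p} \mathbf 1_{\{\Theta^0(z,\sigma)<1\}}(z,\sigma)
\]
depends on $(z,\sigma)$ only through the pair $(\Phi^0(z),\Psi^0(\sigma))$. Specifically, the substitution of \eqref{theta00} shows that this integrand equals $f^\star(\Phi^0(z),\Psi^0(\sigma))$, where $f^\star$ is the function introduced in \eqref{fpiccola}; in particular, the indicator $\mathbf 1_{\mathscr K}(z,\sigma)$ matches $\mathbf 1_{\mathscr K^\star}(\Phi^0(z),\Psi^0(\sigma))$ because $\Theta^0(z,\sigma)<1$ if and only if $\Phi^0(z)^{2(\alpha+1)}+4(\alpha+1)^2 \Psi^0(\sigma)^2<1$. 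This is precisely the content of \eqref{fgrande}.

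Once the integrand has been brought into the form $F(z,\sigma)=f^\star(\Phi^0(z),\Psi^0(\sigma))$, the conclusion follows by invoking \eqref{drad3} of Lemma \ref{L:doubleradial} and multiplying by $Q$:
\[
\sigma_{\alpha,p} = Q\int_{\RN} F(z,\sigma)\, dz d\sigma = Q\,\sigma_\Phi \sigma_\Psi\int_0^\infty\!\int_0^\infty f^\star(r,s)\, r^{m-1} s^{k-1}\, dr\, ds,
\]
and the inner integration is automatically restricted to $\mathscr K^\star$ because of the indicator in the definition of $f^\star$. There is no genuine obstacle in this argument — all the substantive work was done in preparing \eqref{theta00}, in the definitions \eqref{fpiccola}--\eqref{fgrande}, and in establishing Lemma \ref{L:doubleradial}; the present lemma is essentially a bookkeeping statement that isolates the two-variable radial integral on $[0,\infty)\times[0,\infty)$ which will subsequently be evaluated (via a beta-function computation) to produce the explicit constant \eqref{sigmas0}.
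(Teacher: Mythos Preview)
Your proposal is correct and matches the paper's own argument exactly: the paper simply notes that applying \eqref{drad3} of Lemma \ref{L:doubleradial} to the function $F$ in \eqref{fgrande}, while keeping \eqref{sigmas} and \eqref{fpiccola} in mind, yields the stated formula. There is nothing to add.
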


In the next lemma, we compute in closed form the integral in the right-hand side of the equation in Lemma \ref{L:fgo}. Such result will be crucial in the proof of Proposition \ref{P:const} below. In its statement we denote by 
\begin{equation}\label{beta}
B(x,y) = 2 \int_0^{\frac{\pi}{2}} \big(\cos \vt\big)^{2x-1}
\big(\sin \vt\big)^{2y-1} d\vt
\end{equation}
Euler beta function. For the reader's convenience, we recall its well-known property
\begin{equation}\label{bg}
B(x,y) = \frac{\Gamma(x) \Gamma(y)}{\Gamma(x+y)}.
\end{equation}

\begin{lemma}\label{L:crucial}
For any $p>1$, and $m, k>0$, one has
\[
\int_{\mathscr K^\star} \frac{r^{\alpha p}}{\left(r^{2(\alpha+1)} + 4(\alpha+1)^2 s^2\right)^{\frac{\alpha p}{2(\alpha+1)}}} r^{m-1} s^{k-1} dr ds=   \frac{B(\frac{m+\alpha p}{2(\alpha+1)},\frac k2)}{2^{k+1} (\alpha+1)^k (m+(\alpha+1) k)}.
\]
\end{lemma}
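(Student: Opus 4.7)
The plan is to flatten the Wulff-type domain $\mathscr K^\star$ and separate the integral into a beta-function factor times an elementary radial factor. First I would make the change of variables
\[
u = r^{2(\alpha+1)}, \qquad v = 4(\alpha+1)^2 s^2,
\]
which maps $\mathscr K^\star$ bijectively onto the standard simplex $\{(u,v)\in[0,\infty)^2 : u+v<1\}$. A routine Jacobian computation gives
\[
r^{m-1} s^{k-1}\, dr\, ds = \frac{1}{2^{k+2}(\alpha+1)^{k+1}}\, u^{\frac{m}{2(\alpha+1)}-1}\, v^{\frac{k}{2}-1}\, du\, dv,
\]
and since $r^{\alpha p} = u^{\frac{\alpha p}{2(\alpha+1)}}$ and $r^{2(\alpha+1)}+4(\alpha+1)^2 s^2 = u+v$, the integrand is transformed into
\[
\frac{u^{\frac{m+\alpha p}{2(\alpha+1)}-1}\, v^{\frac{k}{2}-1}}{(u+v)^{\frac{\alpha p}{2(\alpha+1)}}}.
\]

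Next I would perform the standard simplex substitution $u = \rho \xi$, $v = \rho(1-\xi)$ with $\rho\in[0,1]$, $\xi\in[0,1]$, whose Jacobian is $du\, dv = \rho\, d\rho\, d\xi$. The powers of $u+v=\rho$ combine so that the integral splits as a product:
\[
\int_0^1 \rho^{\frac{m}{2(\alpha+1)}+\frac{k}{2}-1}\, d\rho \; \cdot \; \int_0^1 \xi^{\frac{m+\alpha p}{2(\alpha+1)}-1}(1-\xi)^{\frac{k}{2}-1}\, d\xi.
\]
The first factor evaluates to $\frac{2(\alpha+1)}{m+(\alpha+1)k}$, while the second factor is exactly $B\!\left(\frac{m+\alpha p}{2(\alpha+1)},\frac{k}{2}\right)$ by definition \eqref{beta}.

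Finally, multiplying by the prefactor $\frac{1}{2^{k+2}(\alpha+1)^{k+1}}$ coming from the first change of variables, a simple arithmetic simplification yields
\[
\frac{B\!\left(\frac{m+\alpha p}{2(\alpha+1)},\frac{k}{2}\right)}{2^{k+1}(\alpha+1)^k\,(m+(\alpha+1)k)},
\]
which is the claimed value. No step is really an obstacle here: the only mildly delicate point is bookkeeping the exponents in the Jacobian of the first substitution, and making sure the $\rho$-exponent reduces to $\frac{m}{2(\alpha+1)}+\frac{k}{2}-1$ after all terms are collected, so that the radial integral converges and gives the $1/(m+(\alpha+1)k)$ factor.
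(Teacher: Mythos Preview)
Your proof is correct. Both arguments reduce the integral to a product of a radial factor $1/(m+(\alpha+1)k)$ and the beta function $B\big(\tfrac{m+\alpha p}{2(\alpha+1)},\tfrac k2\big)$, but the substitutions are organized differently. The paper does this in a single anisotropic ``polar'' change of variables
\[
r=\rho(\cos\vartheta)^{1/(\alpha+1)},\qquad s=\frac{\rho^{\alpha+1}}{2(\alpha+1)}\sin\vartheta,
\]
and then recognizes the $\vartheta$-integral as the trigonometric form \eqref{beta} of the beta function. Your route instead first straightens $\mathscr K^\star$ to the standard simplex via $(u,v)=(r^{2(\alpha+1)},4(\alpha+1)^2 s^2)$, and then applies the textbook simplex parametrization $(u,v)=(\rho\xi,\rho(1-\xi))$ to separate variables. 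The two are related by $\xi=\cos^2\vartheta$ (and a rescaling of $\rho$), so the underlying content is the same; your version has the mild advantage that each step is a standard move requiring no guesswork, while the paper's single tailored substitution is slightly quicker once one sees it.
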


\begin{proof}

Consider the open quadrant $\Om = (0,\infty)\times (0,\infty)$ in $(r,s)$-plane. Define a mapping from the open strip $U = (0,\infty)\times (0,\frac \pi{2})$ of the $(\rho,\vt)$-plane onto $\Om$ by the equations
\[
r = \rho (\cos \vt)^{\frac{1}{\alpha+1}},\ \ \ \ \ \ \ s = \frac{\rho^{\alpha + 1}}{2(\alpha+1)} \sin \vt.
\]
The Jacobian determinant of such mapping is
\[
J(\rho,\vt) = \frac{\rho^{\alpha + 1}}{2(\alpha+1)}  (\cos \vt)^{-\frac{\alpha}{\alpha+1}},
\]
therefore $(\rho,\vt)\to (r,s)$ is a diffeomorphism of $U$ onto $\Om$. Jacobi formula for the change of variable and the defining equation \eqref{beta} thus give
\begin{align*}
& \int_{\mathscr K^\star} \frac{r^{\alpha p}}{\left(r^{2(\alpha+1)} + 4(\alpha+1)^2 s^2\right)^{\frac{\alpha p}{2(\alpha+1)}}} r^{m-1} s^{k-1} dr ds
\\
& = \frac{1}{2^k(\alpha+1)^k} \int_0^1 \rho^{m+(\alpha+1)k - 1} d\rho \int_0^{\frac {\pi}2} (\cos \vt)^{\frac{m+\alpha p}{\alpha+1} - 1} (\sin \vt)^{k-1} d\vt
\\
& = \frac{1}{2^{k+1}(\alpha+1)^k} \frac{B(x,y)}{m+(\alpha +1)k},
\end{align*} 
where
\[
2x -1 = \frac{m+\alpha p}{\alpha+1} - 1,\ \ \ \ \ \ 2y - 1 = k - 1.
\]
This completes the proof.

\end{proof}

Combining Lemmas \ref{L:fgo} and \ref{L:crucial}, we obtain the following result.

\begin{lemma}\label{L:sigmas}
For every $m, k \in \mathbb N$, $\alpha>0$ and $p>1$, one has
\begin{equation}\label{sigmas2}
\sigma_{\alpha,p} = \sigma_\Phi \sigma_\Psi\ \frac{B(\frac{m+\alpha p}{2(\alpha+1)},\frac k2)}{2^{k+1} (\alpha+1)^k}.
\end{equation}
\end{lemma}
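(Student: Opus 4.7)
The plan is to combine the two preceding lemmas, since the statement is essentially their algebraic conjunction. First, I would invoke Lemma \ref{L:fgo}, which expresses $\sigma_{\alpha,p}$ as the product of $\sigma_\Phi \sigma_\Psi$, the factor $Q$ (from the definition \eqref{sigmas} of $\sigma_{\alpha,p}$ as an integral over the Wulff ball $\mathscr K$), and the two-variable integral over the reduced quadrant $\mathscr K^\star$. This already handles the passage from the anisotropic integral over $\mathscr K$ to a scalar double integral, by reducing to $\mathscr K^\star$ via the double-radial change of variables embodied in Lemma \ref{L:doubleradial}.

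Second, I would plug in the closed-form evaluation of that double integral provided by Lemma \ref{L:crucial}, which gives
\[
\int_{\mathscr K^\star} \frac{r^{\alpha p}\, r^{m-1}\, s^{k-1}}{\left(r^{2(\alpha+1)} + 4(\alpha+1)^2 s^2\right)^{\frac{\alpha p}{2(\alpha+1)}}} \, dr\, ds = \frac{B\!\left(\frac{m+\alpha p}{2(\alpha+1)},\frac k2\right)}{2^{k+1} (\alpha+1)^k \, (m+(\alpha+1) k)}.
\]
Recalling from \eqref{Q} that $Q = m + (\alpha+1)k$, the denominator factor $m+(\alpha+1)k$ in the right-hand side is exactly $Q$, and it cancels against the $Q$ appearing in the expression from Lemma \ref{L:fgo}. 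This cancellation is the single crucial arithmetic observation, and once it is made the formula \eqref{sigmas2} follows immediately.

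There is no real obstacle here: both nontrivial ingredients (the reduction to the scalar integral via the Wulff coarea and the explicit beta-function computation via the substitution $r = \rho(\cos\vartheta)^{1/(\alpha+1)}$, $s = \rho^{\alpha+1}\sin\vartheta/(2(\alpha+1))$) have already been handled in the preceding lemmas. The proof is thus a one-line combination, which I would present as such, making the cancellation of $Q$ visible so that the reader sees why the conclusion has the clean form \eqref{sigmas2}. If anything, the only thing worth remarking is that the hypothesis $m,k\in \mathbb{N}$ is not actually used in the derivation — the identity holds for any positive real $m,k$ for which the integrals converge, consistent with the statement of Lemma \ref{L:crucial}.
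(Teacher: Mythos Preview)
Your proposal is correct and matches the paper's own proof exactly: the paper states the lemma immediately after Lemmas \ref{L:fgo} and \ref{L:crucial} with the single line ``Combining Lemmas \ref{L:fgo} and \ref{L:crucial}, we obtain the following result,'' which is precisely the cancellation of $Q = m+(\alpha+1)k$ that you describe.
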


In connection with Theorem \ref{T:frompartoel}, we are particularly interested in the case $\alpha = 1$
and $p = 2$ of Lemma \ref{L:sigmas}. Our objective is establish the following important fact.

\medskip

\begin{proposition}\label{P:const}
When $\alpha = 1$ and $p = 2$, the constant in \eqref{Cippy} admits the following alternate representation
\begin{equation}\label{C}
C_{1,2}  = \frac{1}{(Q-2)\sigma_{1,2}}  = \frac{\sigma_{m-1}\sigma_{k-1}}{\sigma_\Phi\sigma_\Psi} \frac{2^{\frac m2 +2k-4} \G(\frac m4) \G(\frac 12(\frac m2 + k -1))}{\pi^{\frac{m+k+1}2}}.
\end{equation}
As a consequence of \eqref{C}, and of the above cited \eqref{wow} from \cite{DGGS}, the function defined by the integral in the left-hand side of \eqref{beauty} is the fundamental solution of the nonlinear PDE \eqref{pdeti}.
\end{proposition}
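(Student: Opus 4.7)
The plan is to specialise Lemma \ref{L:sigmas} to $\alpha=1$, $p=2$ and then massage the resulting gamma-function expression into the form claimed on the right-hand side of \eqref{C}. The only real ingredient is the Legendre duplication formula, together with the functional equation $\Gamma(z+1)=z\Gamma(z)$ and the classical value $\sigma_{n-1}=2\pi^{n/2}/\Gamma(n/2)$.

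First I would substitute $\alpha=1$, $p=2$ into \eqref{sigmas2}: the exponent of $2$ becomes $2k+1$, and the beta function is
\begin{equation*}
B\!\left(\tfrac{m+2}{4},\tfrac{k}{2}\right)=\frac{\Gamma\!\left(\tfrac{m+2}{4}\right)\Gamma\!\left(\tfrac{k}{2}\right)}{\Gamma\!\left(\tfrac{m+2k+2}{4}\right)},
\end{equation*}
so that $\sigma_{1,2}=\sigma_\Phi\sigma_\Psi\,\Gamma(\tfrac{m+2}{4})\Gamma(\tfrac{k}{2})/(2^{2k+1}\Gamma(\tfrac{m+2k+2}{4}))$. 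Since $Q=m+2k$ by \eqref{Q2}, the first equality in \eqref{C} is immediate from \eqref{Cippy}, giving
\begin{equation*}
C_{1,2}=\frac{2^{2k+1}\,\Gamma\!\left(\tfrac{m+2k+2}{4}\right)}{(m+2k-2)\,\sigma_\Phi\sigma_\Psi\,\Gamma\!\left(\tfrac{m+2}{4}\right)\Gamma\!\left(\tfrac{k}{2}\right)}.
\end{equation*}

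Next I would convert this into the stated form. The identity $\Gamma(z+1)=z\Gamma(z)$ applied to $z=\tfrac{m+2k-2}{4}$ collapses the factor $m+2k-2$ in the denominator:
\begin{equation*}
\Gamma\!\left(\tfrac{m+2k+2}{4}\right)=\tfrac{m+2k-2}{4}\,\Gamma\!\left(\tfrac{m}{4}+\tfrac{k-1}{2}\right).
\end{equation*}
Then I would invoke Legendre duplication in the form
\begin{equation*}
\Gamma\!\left(\tfrac{m}{4}\right)\Gamma\!\left(\tfrac{m+2}{4}\right)=\Gamma\!\left(\tfrac{m}{4}\right)\Gamma\!\left(\tfrac{m}{4}+\tfrac{1}{2}\right)=2^{1-m/2}\sqrt{\pi}\,\Gamma\!\left(\tfrac{m}{2}\right),
\end{equation*}
which rewrites $\Gamma(\tfrac{m+2}{4})^{-1}$ as $2^{m/2-1}\pi^{-1/2}\Gamma(\tfrac{m}{4})/\Gamma(\tfrac{m}{2})$. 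Combining these two steps yields
\begin{equation*}
C_{1,2}=\frac{2^{m/2+2k-2}\,\Gamma\!\left(\tfrac{m}{4}\right)\Gamma\!\left(\tfrac{m}{4}+\tfrac{k-1}{2}\right)}{\sqrt{\pi}\,\sigma_\Phi\sigma_\Psi\,\Gamma\!\left(\tfrac{m}{2}\right)\Gamma\!\left(\tfrac{k}{2}\right)}.
\end{equation*}

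Finally I would insert $\sigma_{m-1}\sigma_{k-1}=4\pi^{(m+k)/2}/(\Gamma(m/2)\Gamma(k/2))$ to replace the factor $1/(\Gamma(m/2)\Gamma(k/2))$ by $\sigma_{m-1}\sigma_{k-1}\pi^{-(m+k)/2}/4$; the $\sqrt{\pi}$ in the denominator then merges with $\pi^{(m+k)/2}$ to produce $\pi^{(m+k+1)/2}$, and the extra factor $1/4=2^{-2}$ combines with $2^{m/2+2k-2}$ to give $2^{m/2+2k-4}$, matching the target expression. The last sentence of the proposition is then an immediate consequence of \eqref{wow} specialised to $\alpha=1$, $p=2$.

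The main obstacle is purely algebraic: keeping track of the many $2$-powers and gamma arguments in one careful chain. The one non-trivial identity is Legendre duplication applied at $z=m/4$, which is what converts $\Gamma(\tfrac{m+2}{4})$ into a ratio involving $\Gamma(m/4)$ and $\Gamma(m/2)$ and is the reason $\Gamma(m/4)$ appears in the final formula \eqref{C}; everything else is bookkeeping.
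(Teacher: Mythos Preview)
Your proof is correct and follows the same overall strategy as the paper: specialise Lemma~\ref{L:sigmas} to $\alpha=1$, $p=2$, then reduce the resulting gamma-function expression using the functional equation and Legendre duplication. Your execution is actually tidier than the paper's: by first absorbing the factor $m+2k-2$ via $\Gamma(z+1)=z\Gamma(z)$ and then applying duplication just once (at $z=m/4$), you avoid the paper's longer chain of reformulations \eqref{fck2}--\eqref{fck4}, which invokes the duplication formula three separate times.
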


\begin{proof}
It is clear that, to prove the proposition, it suffices to show that
\begin{equation}\label{fck}
\sigma_{1,2} =  \frac{\sigma_\Phi\sigma_\Psi}{\sigma_{m-1}\sigma_{k-1}} \frac{\pi^{\frac{m+k+1}2}}{(m+2k-2) 2^{\frac m2 +2k-4} \G(\frac m4) \G(\frac 12(\frac m2 + k -1))},
\end{equation}
where we have used \eqref{Q2} to write $Q-2 = m+2k -2$. Since according to \eqref{sigmas2} we have
\[
\sigma_{1,2} = \sigma_\Phi \sigma_\Psi\ \frac{B(\frac{m+2}{4},\frac k2)}{2^{2k+1}},
\]
keeping \eqref{bg} in mind, we infer that, in order to establish \eqref{fck}, we need to prove the following identity
\begin{equation}\label{fck2}
\frac{\G(\frac{m}{4} + \frac 12) \G(\frac k2)}{\G(\frac{m+2k + 2}{4})} = \frac{1}{\sigma_{m-1}\sigma_{k-1}} \frac{\pi^{\frac{m+k+1}2}}{(m+2k-2) 2^{\frac m2-5} \G(\frac m4) \G(\frac 12(\frac m2 + k -1))}.
\end{equation}
Since $\sigma_{m-1} = \frac{2\pi^{\frac m2}}{\G(\frac m2)}$, $\sigma_{k-1} = \frac{2\pi^{\frac k2}}{\G(\frac k2)}$, we see that \eqref{fck2} is equivalent to proving that 
\begin{equation}\label{fck3}
\frac{\G(\frac 12(\frac{m}{2} + 1))}{\G(\frac 12(\frac{m}{2}+k+1))} =  \frac{\sqrt \pi\  \G(\frac m2)}{(m+2k-2) 2^{\frac m2-3} \G(\frac m4) \G(\frac 12(\frac m2 + k -1))}.
\end{equation}
At this point we use twice the Legendre duplication formula (see e.g. \cite[(1.2.3) on p.3]{Le})
\begin{equation}\label{prod}
2^{2x-1} \G(x) \G(x+\frac 12) = \sqrt \pi \G(2x).
\end{equation}
The first time, we take $x = \frac 12(\frac{m}{2} + 1)$, for which we have 
\[
2x -1 = \frac m2,\ \ \ \ \ \ \  x + \frac 12 = \frac m4 + 1,
\]
obtaining
\begin{equation}\label{num}
\G(\frac 12(\frac{m}{2} + 1)) = \frac{\sqrt \pi\ \G(\frac m2)}{2^{\frac m2 - 1} \G(\frac m4)}.
\end{equation}
For the reader's convenience we mention that we have repeatedly used the well-known formula $\G(\nu + 1) = \nu \G(\nu)$. 
The second time, we take $x = \frac 12(\frac{m}{2}+k+1)$, which gives
\[
2x -1 = \frac m2 + k,\ \ \ \ \ \ \ x + \frac 12 = \frac 12(\frac m2+k) + 1.
\]
We thus find
\begin{equation}\label{den}
\frac{1}{\G(\frac 12(\frac{m}{2}+k+1))} = \frac{2^{\frac m2 + k -1} \G(\frac 12(\frac m2 + k))}{\sqrt \pi\ \G(\frac m2 + k)}.
\end{equation}
Combining \eqref{num} with \eqref{den}, we see that now \eqref{fck3} is reduced to verifying whether the identity
\begin{equation}\label{fck4}
2^{\frac m2 + k-3}\ \G(\frac 12(\frac m2 + k -1)) \G(\frac 12(\frac{m}{2} + k)) =  \frac{\sqrt \pi\ \G(\frac{m}{2}+k)}{(m+2k-2)}
\end{equation}
is true or not. This can be accomplished by one last application of \eqref{prod}, this time with $x = \frac 12(\frac m2 + k -1)$, for which
\[
2x -1 =  \frac m2 + k -2,\ \ \ \ \ \ \ x + \frac 12 = \frac 12(\frac{m}{2} + k).
\]
We thus find that \eqref{fck4} becomes equivalent to verifying that the following identity holds
\[
\frac{\G(\frac m2 + k -1)}{2} =  \frac{\G(\frac m2 + k)}{m+2k-2}.
\]
This is obviously true as a consequence of $\G(\nu+1) = \nu \G(\nu)$. We have thus completed the proof of  Proposition \ref{P:const}.

\end{proof}

\medskip

 
\section{Construction of the heat kernel}\label{S:main}

\medskip

In this section we prove Theorem \ref{T:main}. We begin by recalling that, with $G_\nu$ defined by \eqref{Gnu} above, 
if 
$h^\star:\R^+ \to \C$ is a measurable function, its Hankel
transform of order $\nu$ is defined by
\begin{equation}\label{BT}
\mathcal H_\nu(h^\star)(s) = (2\pi)^{\nu+1}  \int_0^\infty h^\star(u) G_\nu(2\pi u s) u^{2\nu+1} du,
\end{equation}
see p. 18 in \cite{MS}, and also \cite[Sec. 22]{Gft}. Suppose now that $h(\la) = h^\star(|\la|)$, with $\la\in \R^k$ is a spherically symmetric function in $\R^k$. Its Fourier transform is given by the well-known Bochner formula 
\begin{equation}\label{boch}
\hat h(\sigma) = \frac{2\pi}{|\sigma|^{\frac k2 -1}} \int_0^\infty h^\star(u) J_{\frac k2 -1}(2\pi |\sigma| u) u^{\frac k2} du.
\end{equation}
Comparing \eqref{boch} with \eqref{BT}, we see that the Hankel representation of $\hat h$ is given by
\begin{equation}\label{hankelFT}
\hat h(\sigma) = \mathcal H_{\frac{k-2}{2}}(h^*)(|\sigma|)
 = (2\pi)^{\frac k2} \int_0^\infty  h^\star(u) G_{\frac k2-1}(2\pi  u |\sigma|) u^{k-1} du.
\end{equation}

We next recall the following result, see \cite[Theorem 3.4]{GT}.
\begin{theorem}\label{T:bg}
Given the \emph{PDE} in $\RN\times \R$, 
\begin{equation}\label{bgE}
\p_t f = \Delta_z f + \frac{|z|^2}{4} \Delta_\sigma f,
\end{equation}
its heat kernel with pole at $X' = (z',\sigma')\in \RN$ of is given by
\begin{align*}
H(X,X',t) & =  \frac{2^k}{(4\pi t)^{\frac{m}2 +k}} \int_{\R^k} e^{- \frac it \langle \la,\sigma'-\sigma\rangle}   \left(\frac{|\la|}{\sinh |\la|}\right)^{\frac m2} 
   e^{-\frac{|\la|}{4t \tanh |\la|} ((|w|^2 +|w'|^2) - 2 \langle w,w'\rangle \sech |\la|)} d\la.
\end{align*}
\end{theorem}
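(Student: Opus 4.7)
The plan is to reduce \eqref{bgE} to an $\eta$-parameter family of harmonic oscillator heat equations by Fourier transform in the degenerate direction $\sigma$, apply Mehler's formula, and then invert. For $\eta\in\R^k$ set $\hat f(z,\eta,t)=\int_{\R^k}e^{-i\sa\eta,\sigma\da}f(z,\sigma,t)\,d\sigma$; since $\widehat{\Delta_\sigma f}=-|\eta|^2\hat f$, equation \eqref{bgE} transforms into
\begin{equation*}
\p_t\hat f=\Delta_z\hat f-\frac{|\eta|^2}{4}|z|^2\hat f,
\end{equation*}
i.e.\ the forward heat flow for the harmonic oscillator $H_\eta=-\Delta_z+\tfrac{|\eta|^2}{4}|z|^2$ on $\Rm$ with frequency $\omega=|\eta|/2$. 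This is the key structural reduction: Fourier transform in $\sigma$ trades the nonlocal degeneracy for a quadratic potential and decouples the problem into a one-parameter family of linear Schr\"odinger-type heat equations on $\Rm$.

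Next I would invoke Mehler's formula for $e^{-tH_\eta}$, which I would verify independently by the Gaussian ansatz $\hat H(z,z',\eta,t)=a(t)\exp\bigl(-A(t)(|z|^2+|z'|^2)+B(t)\sa z,z'\da\bigr)$. Inserting into the transformed PDE and matching coefficients of $1$, $|z|^2$, and $\sa z,z'\da$ yields a Riccati-type ODE for $A$, a linear ODE for $B$, and an ODE for $a$; the solutions compatible with $\hat H\to\delta_{z=z'}$ as $t\to 0^+$ are $a(t)=(|\eta|/(4\pi\sinh(|\eta|t)))^{m/2}$, $A(t)=\tfrac{|\eta|}{4}\coth(|\eta|t)$, $B(t)=\tfrac{|\eta|}{2\sinh(|\eta|t)}$. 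Using $\coth u=1/\tanh u$ and $1/\sinh u=\sech u/\tanh u$ this rearranges to
\begin{equation*}
\hat H(z,z',\eta,t)=\left(\frac{|\eta|}{4\pi\sinh(|\eta|t)}\right)^{\!m/2}\exp\!\left(-\frac{|\eta|}{4\tanh(|\eta|t)}\bigl[(|z|^2+|z'|^2)-2\sa z,z'\da\sech(|\eta|t)\bigr]\right).
\end{equation*}

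Inverse Fourier transform in $\sigma$ then gives
\begin{equation*}
H(X,X',t)=\frac{1}{(2\pi)^k}\int_{\R^k}e^{i\sa\eta,\sigma-\sigma'\da}\hat H(z,z',\eta,t)\,d\eta,
\end{equation*}
and the rescaling $\eta=\la/t$, $d\eta=t^{-k}d\la$, converts every occurrence of $|\eta|t$ into $|\la|$ and extracts a prefactor $t^{-m/2-k}$ from $|\eta|^{m/2}d\eta$. Collecting constants yields $(2\pi)^{-k}(4\pi)^{-m/2}t^{-m/2-k}=2^k(4\pi t)^{-m/2-k}$, matching the prefactor in the claim, and the exponential in the integrand takes precisely the stated form $(|\la|/\sinh|\la|)^{m/2}\exp(-\tfrac{|\la|}{4t\tanh|\la|}[\cdots])$.

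The main obstacle is purely technical: justifying Fubini and the convergence of the $\eta$-integral. For $|\eta|\to\infty$, $\sinh(|\eta|t)\sim\cosh(|\eta|t)\sim\tfrac12 e^{|\eta|t}$ forces $\sech(|\eta|t)\to 0$ exponentially, so $\hat H$ behaves like a Gaussian in $z,z'$ whose variance vanishes exponentially in $|\eta|$, giving more than enough decay in $\eta$ for absolute convergence once $z\neq z'$ (on the diagonal a standard limiting argument suffices). For $\eta\to 0$, Taylor expansion gives $|\eta|/\sinh(|\eta|t)\to 1/t$, $|\eta|\coth(|\eta|t)\to 1/t$, and $|\eta|\sech(|\eta|t)/\sinh(|\eta|t)\to 1/t$, so $\hat H$ extends continuously to a Gaussian in $z-z'$ of variance proportional to $t$; no singularity appears at the origin. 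These bounds permit differentiation under the integral to confirm that the resulting $H$ solves \eqref{bgE}, while the initial condition $H(\cdot,X',t)\to\delta_{X'}$ as $t\to 0^+$ follows from the Mehler step in $z$ together with Fourier inversion in $\sigma$.
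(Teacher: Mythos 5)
The paper does not prove Theorem \ref{T:bg}; it is quoted verbatim from \cite[Theorem 3.4]{GT}. Your proof is correct and is the standard route for Baouendi--Grushin-type operators of this kind: a Fourier transform in $\sigma$ reduces \eqref{bgE} to the $\eta$-parametrized isotropic harmonic oscillator $-\Delta_z+\tfrac{|\eta|^2}{4}|z|^2$ on $\Rm$, Mehler's kernel with frequency $\omega=|\eta|/2$ gives $\hat H_\eta$, and Fourier inversion followed by the rescaling $\eta=\la/t$ produces precisely the displayed expression (and the constant bookkeeping checks out: $(2\pi)^{-k}(4\pi t)^{-m/2}t^{-k}=2^k(4\pi t)^{-m/2-k}$). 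This is, to the best that can be determined from the citation, the same argument used in \cite{GT}, so no essential divergence from the source.
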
 
The fact that $H(X,X',t)$ is the heat kernel of \eqref{bgE} means that, for every fixed $X'\in \RN$, the function $(X,t)\to H(X,X',t)$ solves \eqref{bgE}, and that furthermore one has
\[
\int_{\RN} H(X,X',t) \vf(X') dX'\ \underset{t\to 0^+}{\longrightarrow}\ \vf(X).
\]
Henceforth, when $X' = 0\in \RN$, we use the abbreviated notation $H(X,t) = H(X,0,t)$, so that
\begin{equation}\label{GG}
H(X,t)  = \frac{2^k}{(4\pi t)^{\frac{m}2 +k}} \int_{\R^k} e^{-\frac{i}{t} \sa \sigma,\la \da}   \left(\frac{|\la|}{\sinh |\la|}\right)^{\frac m2} e^{-\frac{|\la|}{\tanh |\la|} \frac{|z|^2}{4t}} d\la,\ \ \ \ \ \ t>0.
\end{equation}
It is clear from \eqref{hankelFT} and \eqref{GG} that, if we consider in $\R^k$ the rapidly decreasing spherically symmetric function 
\begin{equation}\label{h}
\la\ \longrightarrow\ h(\la) = h^\star(|\la|) = \left(\frac{|\la|}{\sinh |\la|}\right)^{\frac m2} e^{-\frac{|\la|}{\tanh |\la|} \frac{|z|^2}{4t}},
\end{equation}
then
\begin{equation}\label{H}
H(X,t) = \frac{2^k}{(4\pi t)^{\frac{m}2 +k}}\ \hat h(\frac{\sigma}{2\pi t}),
\end{equation} 
and therefore we have the following.

\begin{lemma}\label{L:hankel}
The function in \eqref{GG} can be written as
\begin{align}\label{FB2}
H(X,t) & =  \frac{(2\pi)^{-\frac k2} (4\pi)^{-\frac m2} }{t^{\frac m2+k}}  \int_0^\infty  \left(\frac{u}{\sinh u}\right)^{\frac m2} e^{-\frac{u}{\tanh u} \frac{|z|^2}{4t}} G_{\frac k2 -1}(\frac{u |\sigma|}t)\ u^{k -1}\ du.
\end{align}
\end{lemma}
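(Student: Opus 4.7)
The plan is to recognize the integral in \eqref{GG} as (a rescaled) Fourier transform of the spherically symmetric function $h(\lambda)=h^\star(|\lambda|)$ defined in \eqref{h}, and then apply the Hankel representation \eqref{hankelFT} for the Fourier transform of a radial function. The identity \eqref{H} has already been pointed out in the text just before the lemma, so the essential content is to unfold that identity using \eqref{hankelFT}, and to verify that the prefactors collapse to $(2\pi)^{-k/2}(4\pi)^{-m/2} t^{-m/2-k}$.

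First I would match Fourier conventions. Using $\hat h(\xi)=\int_{\R^k} h(\lambda)e^{-2\pi i\langle\xi,\lambda\rangle}d\lambda$, the phase $e^{-\frac{i}{t}\langle\sigma,\lambda\rangle}$ in \eqref{GG} is exactly $e^{-2\pi i\langle\xi,\lambda\rangle}$ with $\xi=\sigma/(2\pi t)$, so
\[
\int_{\R^k} e^{-\frac{i}{t}\langle\sigma,\lambda\rangle}h(\lambda)\,d\lambda \;=\; \hat h\!\left(\tfrac{\sigma}{2\pi t}\right),
\]
which reproduces \eqref{H}.

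Next I would substitute into \eqref{hankelFT} with the profile $h^\star(u)=\big(\tfrac{u}{\sinh u}\big)^{m/2}e^{-\frac{u}{\tanh u}\frac{|z|^2}{4t}}$ read off from \eqref{h}. Since the argument of $G_{k/2-1}$ in \eqref{hankelFT} is $2\pi u\,|\xi|$ evaluated at $|\xi|=|\sigma|/(2\pi t)$, it simplifies to $u|\sigma|/t$, and
\[
\hat h\!\left(\tfrac{\sigma}{2\pi t}\right)=(2\pi)^{k/2}\int_0^\infty\!\!\left(\tfrac{u}{\sinh u}\right)^{m/2}\!e^{-\frac{u}{\tanh u}\frac{|z|^2}{4t}}\,G_{k/2-1}\!\left(\tfrac{u|\sigma|}{t}\right)u^{k-1}\,du.
\]

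Finally I would verify the constant collapse in \eqref{H}:
\[
\frac{2^k\,(2\pi)^{k/2}}{(4\pi t)^{m/2+k}} \;=\; \frac{2^{k+k/2}\,\pi^{k/2}}{2^{m+2k}\,\pi^{m/2+k}\,t^{m/2+k}} \;=\; \frac{2^{-k/2-m}\,\pi^{-k/2-m/2}}{t^{m/2+k}} \;=\; \frac{(2\pi)^{-k/2}(4\pi)^{-m/2}}{t^{m/2+k}},
\]
which is exactly the prefactor in \eqref{FB2}. The only genuine pitfall is this bookkeeping of factors of $2\pi$: the Bochner formula \eqref{boch} uses the $2\pi$-normalized Fourier transform while \eqref{GG} uses the $1/t$-normalized phase, and these must be reconciled carefully so that the Bessel variable comes out as $u|\sigma|/t$ (no stray $2\pi$) and the overall constant lands on $(2\pi)^{-k/2}(4\pi)^{-m/2}$. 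Once conventions are aligned, the lemma is just a one-line application of \eqref{hankelFT}.
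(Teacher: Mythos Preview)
Your proposal is correct and follows exactly the route the paper intends: the paper derives \eqref{H} and then states the lemma with the words ``and therefore we have the following,'' meaning that \eqref{FB2} is obtained by substituting the Hankel representation \eqref{hankelFT} of $\hat h$ into \eqref{H} and simplifying the constants. Your verification of the prefactor is accurate and fills in the bookkeeping the paper leaves implicit.
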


We reiterate that the function $H(X,t)$ solves in $\RN\times (0,\infty)$ the PDE \eqref{bgE}, i.e.,
\begin{equation}\label{pdeagain}
H_t = \Delta_z H + \frac{|z|^2}4 \Delta_\sigma H.
\end{equation}

We can now provide the 

\medskip

\begin{proof}[Proof of Theorem \ref{T:main}]
It is clear that $H(X,t)$ is spherically symmetric both in $z\in \Rm$ and $\sigma \in \R^k$. In fact, from \eqref{FB2} in Lemma \ref{L:hankel}, we can write it in the form 
\begin{equation}\label{HF}
H(X,t) = F(|z|,|\sigma|,t),
\end{equation}
where 
\begin{equation}\label{Frst}
F(r,s,t) = \frac{(2\pi)^{-\frac k2} (4\pi)^{-\frac m2} }{t^{\frac{m}2 +k}} \int_0^\infty \left(\frac{u}{\sinh u}\right)^{\frac m2} e^{-\frac{u}{\tanh u} \frac{r^2}{4t}}\ G_{\frac k2 -1}(\frac{u s}t)\ u^{k -1}\ du.
\end{equation}
From the fact that $H(X,t)$ solves the PDE \eqref{pdeagain} in $\RN\times (0,\infty)$, we infer that the function \eqref{F} satisfies the following PDE in the variables $(r,s,t)\in \R^+\times\R^+\times(0,\infty)$
\begin{equation}\label{bgrst}
F_t = \left\{F_{rr}+ \frac{m-1}r F_r + \frac{r^2}4 \left(F_{ss} + \frac{k-1}s F_s\right)\right\}.
\end{equation}
Since, on the other hand, it is obvious from \eqref{FB} and \eqref{Frst} that
\begin{equation}\label{GiF}
\Gi = \frac{\sigma_{m-1}\sigma_{k-1}}{\sigma_\Phi \sigma_\Psi} F(\Phi^0(z),\Psi^0(\sigma),t),
\end{equation}
from the chain rule and a double application of Proposition \ref{P:fin}, we infer
\begin{align}\label{Delphi}
& \left\{\Delta_\Phi(\mathscr G) + \frac{\Phi^0(z)^2}{4} \Delta_\Psi(\mathscr G)\right\}(X,t)
\\
& = \frac{\sigma_{m-1}\sigma_{k-1}}{\sigma_\Phi \sigma_\Psi} \left\{F_{rr} + \frac{m-1}{r} F_r + \frac{r^2}4 \left(F_{ss} + \frac{k-1}{s} F_s\right)\right\}(\Phi^0(z),\Psi^0(\sigma),t)
\notag
\\
& = \frac{\sigma_{m-1}\sigma_{k-1}}{\sigma_\Phi \sigma_\Psi} F_t(\Phi^0(z),\Psi^0(\sigma),t) = \mathscr G_t(X,t),
\notag
\end{align}
where in the second to the last equality we have used \eqref{bgrst}.
The equation \eqref{Delphi} shows that the function $\Gi$ solves the nonlinear PDE \eqref{pde}.

We next prove \eqref{Gint}. We have from \eqref{FB} and Fubini's theorem
\begin{align*}
\int_{\RN} \Gi dX & =  \frac{\sigma_{m-1}\sigma_{k-1}}{\sigma_\Phi \sigma_\Psi} \frac{(2\pi)^{-\frac k2} (4\pi)^{-\frac m2} }{t^{\frac m2+k}}  \int_{\R^k} \int_0^\infty \left(\frac{u}{\sinh u}\right)^{\frac m2} G_{\frac k2 -1}(\frac{u \Psi^0(\sigma)}t)\ u^{k -1}
\\
& \times  \int_{\Rm}  e^{-\frac{u}{\tanh u} \frac{\Phi^0(z)^2}{4t}} dz\ du\ d\sigma.
\end{align*}
By the homogeneity of $\Phi^0$ and a change of variable, we have
\begin{align*}
& \int_{\Rm}  e^{-\frac{u}{\tanh u} \frac{\Phi^0(z)^2}{4t}} dz = \int_{\Rm}  e^{-\Phi^0(\sqrt{\frac{u}{\tanh u} \frac{1}{4t}} z)^2} dz
= \left(\frac{\tanh u}{u}\right)^{\frac m2} 2^m t^{\frac m2}\int_{\Rm}  e^{-\Phi^0(z)^2} dz.
\end{align*}
Now notice that Lemma \ref{L:radial} gives
\begin{align*}
&  \int_{\Rm}  e^{-\Phi^0(z)^2} dz = \frac{\sigma_\Phi}{\sigma_{m-1}} \pi^{\frac m2}.
\end{align*}
Substitution in the above equation, thus gives
\begin{align*}
& \int_{\RN} \Gi dX = \frac{\sigma_{k-1}}{\sigma_\Psi}
  \frac{(2\pi)^{-\frac k2}}{t^{k}} \int_{\R^k} \int_0^\infty \left(\frac{1}{\cosh u}\right)^{\frac m2} G_{\frac k2 -1}(\frac{u \Psi^0(\sigma)}t)\ u^{k -1} du\ d\sigma
\\
& = \frac{\sigma_{k-1}}{\sigma_\Psi}
  (2\pi)^{-\frac k2}  \int_{\R^k} \int_0^\infty \left(\frac{1}{\cosh u}\right)^{\frac m2} G_{\frac k2 -1}(u \Psi^0(\sigma))\ u^{k -1} du\ d\sigma
\\
& = (2\pi)^{-\frac k2}   \int_{\R^k} \int_0^\infty \left(\frac{1}{\cosh u}\right)^{\frac m2} G_{\frac k2 -1}(u |\sigma|)\ u^{k -1} du\ d\sigma,
\end{align*}
where in the last equality we have used Lemma \ref{L:radial} again.
Comparing with \eqref{hankelFT} we see that
\begin{align}\label{last}
& \int_{\RN} \Gi dX =  (2\pi)^{-k}  \int_{\R^k} \hat h(\frac{\sigma}{2\pi}) d\sigma,
\end{align}
where 
\[
h(\la) = \left(\frac{1}{\cosh |\la|}\right)^{\frac m2}.
\]
Since $h\in \mathscr S(\R^k)$, and $h(0) = 1$, from the classical inversion formula for the Fourier transform we find
\[
\int_{\R^k} \hat h(\frac{\sigma}{2\pi}) d\sigma = (2\pi)^k \int_{\R^k} \hat h(\sigma) d\sigma = (2\pi)^k h(0) = (2\pi)^k.
\]
Substituting in \eqref{last}, we have proved \eqref{Gint}, thus completing the proof of Theorem \ref{T:main}.

\end{proof}

\medskip


\section{The nonlinear heat equation sees the anisotropic Minkowski gauge $\Theta^0$}\label{S:beau}

\medskip

This section is devoted to the  
\medskip

\begin{proof}[Proof of Theorem \ref{T:frompartoel}]
To save on space, we will compute the integral 
\begin{equation}\label{I}
\mathscr I = \int_0^\infty \frac{1}{t^{\frac{m}2 +k-1}} \int_0^\infty  \left(\frac{u}{\sinh u}\right)^{\frac m2} e^{-\frac{u}{\tanh u} \frac{\Phi^0(z)^2}{4t}} G_{\frac k2 -1}(\frac{u \Psi^0(\sigma)}t)\ u^{k -1}\ du\ \frac{dt}t,
\end{equation}
and then multiply the result by the constant
\begin{equation}\label{co}
\frac{\sigma_{m-1}\sigma_{k-1}}{\sigma_\Phi\sigma_\Psi} (2\pi)^{-\frac k2} (4\pi)^{-\frac m2}.
\end{equation}
To begin, we note that an obvious change of variable shows that
\begin{align}\label{I2}
\mathscr I & = \int_0^\infty t^{\frac{m}2 +k-2} \int_0^\infty  \left(\frac{u}{\sinh u}\right)^{\frac m2} e^{- t \frac{u}{\tanh u} \frac{\Phi^0(z)^2}{4}} G_{\frac k2 -1}(u \Psi^0(\sigma) t)\ u^{k -1}\ du\ dt
\\
& = \Psi^0(\sigma)^{1-\frac k2}\int_0^\infty \left(\frac{u}{\sinh u}\right)^{\frac m2} \ u^{\frac k2} \int_0^\infty t^{\frac{m}2 +\frac{k}2-1}   e^{- t \frac{u}{\tanh u} \frac{\Phi^0(z)^2}{4}} J_{\frac k2 -1}(u \Psi^0(\sigma) t)\ dt\ du,
\notag
\end{align}
where the second inequality is justified by an exchange of the order of integration and by having used \eqref{Gnu}.

To unravel the inner integral in $t$ in the right-hand side of \eqref{I2}, we now use the following formula due to Gegenbauer, see (3) on p. 385 in \cite{Wa}: suppose that \begin{equation}\label{req}
\Re(\nu+\mu)>0,\ \Re(\alpha+i\beta)>0,\ \Re(\alpha-i\beta)>0.
\end{equation}
Then one has  
\begin{equation}\label{futrll}
\int_0^\infty t^{\mu-1} e^{-\alpha t} J_\nu(\beta t) dt = \frac{2^{-\nu} \beta^\nu \G(\nu+\mu)}{\G(\nu+1) (\alpha^2 + \beta^2)^{\frac{\nu+\mu}2}} F\left(\frac{\nu+\mu}{2},\frac{1-\mu+\nu}{2};\nu+1;\frac{\beta^2}{\alpha^2 + \beta^2}\right), 
\end{equation}
In \eqref{futrll} we have denoted by 
\begin{equation}\label{F}
F(\alpha_1,\alpha_2;\beta_1;z) = \frac{\G(\beta_1)}{\G(\alpha_1)\G(\alpha_2)} \sum_{k=0}^\infty \frac{\G(k + \alpha_1) \G(k+\alpha_2)}{\G(k+\beta_1) k!} z^k,
\end{equation}
Gauss' hypergeometric function. We now assume $z\in \Rm\setminus\{0\}$, $\sigma\in \R^k\setminus\{0\}$, and for $u\in (0,\infty)$ fixed, apply \eqref{futrll} with the choice of parameters
\[
\nu = \frac k2 -1,\ \ \ \mu = \frac m2 + \frac k2,\ \ \ \alpha = \frac{u}{\tanh u} \frac{\Phi^0(z)^2}{4},\ \ \ \beta = u \Psi^0(\sigma),
\]
which gives
\[
\nu + \mu = \frac m2 + k - 1,\ \ \ \ 1-\mu+\nu = -\frac m2.
\]
Notice that, since $m\ge 1, k\ge 1$, we have $\nu+\mu\ge \frac 12>0$, and since $z\not= 0$, all requirements in \eqref{req} are fulfilled. Since
\[
\alpha^2 + \beta^2 = \frac{u^2}{16 \tanh^2 u} (\Phi^0(z)^4 + 16 \Psi^0(\sigma)^2 \tanh^2 u),\ \ \ \frac{\beta^2}{\alpha^2 + \beta^2} = \frac{16 \Psi^0(\sigma)^2 \tanh^2 u}{\Phi^0(z)^4 + 16 \Psi(\sigma)^2 \tanh^2 u},
\]
we obtain from \eqref{futrll}
\begin{align}\label{futral}
& \int_0^\infty t^{\frac{m}2 +\frac{k}2-1}   e^{- t \frac{u}{\tanh u} \frac{\Phi^0(z)^2}{4}} J_{\frac k2 -1}(u \Psi^0(\sigma) t)\ dt
= \frac{2^{1-\frac k2} (u\Psi^0(\sigma))^{\frac k2 -1} \G(\frac m2 + k - 1)}{\G(\frac k2) (\Phi^0(z)^4 + 16 \Psi^0(\sigma)^2 \tanh^2 u)^{\frac{1}2(\frac m2 + k - 1)}} 
\\
& \times \left(\frac{16 \tanh^2 u}{u^2}\right)^{\frac{1}2(\frac m2 + k - 1)} F\left(\frac{1}{2}(\frac m2 + k - 1),-\frac m4;\frac k2;\frac{16 \Psi^0(\sigma)^2 \tanh^2 u}{\Phi^0(z)^4 + 16 \Psi^0(\sigma)^2 \tanh^2 u}\right)
\notag
\\
& = \frac{4^{\frac m2 + k - 1}2^{1-\frac k2} \Psi^0(\sigma)^{\frac k2 -1}  \G(\frac m2 + k - 1)}{\G(\frac k2) } \frac{u^{\frac k2 -1} (\tanh^2 u)^{\frac{1}2(\frac m2 + k - 1)}}{u^{\frac m2 + k - 1} (\Phi^0(z)^4 + 16 \Psi^0(\sigma)^2 \tanh^2 u)^{\frac{1}2(\frac m2 + k - 1)}}
\notag
\\
& \times F\left(\frac{1}{2}(\frac m2 + k - 1),-\frac m4;\frac k2;\frac{16 \Psi^0(\sigma)^2 \tanh^2 u}{\Phi^0(z)^4 + 16 \Psi^0(\sigma)^2 \tanh^2 u}\right).
\notag
\end{align}
Substituting \eqref{futral} in \eqref{I2}, we find
\begin{align}\label{I3}
\mathscr I & = \frac{4^{\frac m2 + k - 1}2^{1-\frac k2}\G(\frac m2 + k - 1)}{\G(\frac k2)} \int_0^\infty \left(\frac{1}{\sinh^2 u}\right)^{\frac m4} \frac{(\tanh^2 u)^{\frac{1}2(\frac m2 + k - 1)}}{(\Phi^0(z)^4 + 16 \Psi^0(\sigma)^2 \tanh^2 u)^{\frac{1}2(\frac m2 + k - 1)}}
\\
& \times   F\left(\frac{1}{2}(\frac m2 + k - 1),-\frac m4;\frac k2;\frac{16 \Psi^0(\sigma)^2 \tanh^2 u}{\Phi^0(z)^4 + 16 \Psi^0(\sigma)^2 \tanh^2 u}\right)\ du,
\notag
\end{align}
It might be helpful for the reader to notice that in the above computation a first little miracle has happened: the powers of $u$ have disappeared from the integrand. This crucial aspect hides an important geometric information. To proceed in the computation we somehow need to kill the enemy, i.e., the very unpleasant factor 
\[
\frac{1}{(\Phi^0(z)^4 + 16 \Psi^0(\sigma)^2 \tanh^2 u)^{\frac{1}2(\frac m2 + k - 1)}}
\]
in the integral \eqref{I3}. The appropriate tool for the job is the following Kummer's relation (one of many), which prescribes the change of the hypergeometric function $F$ under linear transformations (see  formula (3) on p. 105 in \cite{E}),
\begin{equation}\label{hyperG}
F(\alpha,\beta;\gamma;x) = (1-x)^{-\alpha} F\left(\alpha,\gamma - \beta;\gamma;\frac{x}{x-1}\right),\ \ \ \ \ \ x\not=1, \ |\arg(1-x)|<\pi. 
\end{equation}
Comparing \eqref{I3} with \eqref{hyperG}, it appears evident that we should apply the latter with the choice
\begin{align*}
& \frac{x}{x-1} = \frac{16 \Psi^0(\sigma)^2 \tanh^2 u}{\Phi^0(z)^4 + 16 \Psi^0(\sigma)^2 \tanh^2 u},\ \ \ \gamma = \frac k2,\ \ \ \gamma - \beta = -\frac m4,\ \ \ \alpha = \frac{1}{2}(\frac m2 + k - 1).
\end{align*}
This choice gives
\[
x = - \frac{16 \Psi^0(\sigma)^2 \tanh^2 u}{\Phi^0(z)^4},\ \ \ \ \ \ \beta = \frac k2 + \frac m4.
\]
What is crucial for us is that
\[
1-x = \frac{\Phi^0(z)^4+ 16 \Psi^0(\sigma)^2 \tanh^2 u}{\Phi^0(z)^4}\ \not=\ 0, 
\]
and therefore
\[
(1-x)^\alpha = \left(\frac{\Phi^0(z)^4+ 16 \Psi^0(\sigma)^2 \tanh^2 u}{\Phi^0(z)^4}\right)^{\frac{1}{2}(\frac m2 + k - 1)}.
\]
Applying \eqref{hyperG} we thus find
\begin{align}\label{ultrafcktrll}
& F\left(\frac{1}{2}(\frac m2 + k - 1),-\frac m4;\frac k2;\frac{16 \Psi^0(\sigma)^2 \tanh^2 u}{\Phi^0(z)^4 + 16 \Psi^0(\sigma)^2 \tanh^2 u}\right) 
\\
& = \left(\frac{\Phi^0(z)^4+ 16 \Psi^0(\sigma)^2 \tanh^2 u}{\Phi^0(z)^4}\right)^{\frac{1}{2}(\frac m2 + k - 1)}
\notag\\
&  \times F\left(\frac{1}{2}(\frac m2 + k - 1),\frac m4+ \frac k2;\frac k2;- \frac{16 \Psi^0(\sigma)^2 \tanh^2 u}{\Phi^0(z)^4}\right).
\notag
\end{align}
Substitution of \eqref{ultrafcktrll} into \eqref{I3} gives
\begin{align}\label{I4}
\mathscr I & = \frac{4^{\frac m2 + k - 1}2^{1-\frac k2}\G(\frac m2 + k - 1)}{\G(\frac k2)\Phi^0(z)^{2(\frac m2 + k - 1)}} \int_0^\infty \left(\frac{1}{\sinh^2 u}\right)^{\frac m4} (\tanh^2 u)^{\frac{1}2(\frac m2 + k - 1)}
\\
& \times   F\left(\frac{1}{2}(\frac m2 + k - 1),\frac m4+ \frac k2;\frac k2;- \frac{16 \Psi^0(\sigma)^2 \tanh^2 u}{\Phi^0(z)^4}\right)\ du.
\notag
\end{align}
Formula \eqref{I4} represents the second little miracle in the proof of Theorem \ref{T:frompartoel}. Our next objective is to finally compute in closed form the integral in the right-hand side of \eqref{I4}. With this in mind, we intend to make the crucial change of variable $y = \tanh^2 u$, which gives $dy = \frac{2\tanh u}{\cosh^2 u} du$.
To do this we rearrange part of the integrand in the following way
\begin{align*}
& \left(\frac{1}{\sinh^2 u}\right)^{\frac m4} (\tanh^2 u)^{\frac{1}2(\frac m2 + k - 1)} 
 = \frac 12 \left(\frac{1}{\cosh^2 u}\right)^{\frac m4-1} (\tanh^2 u)^{\frac{k-2}2}\ \frac{2\tanh u}{\cosh^2 u}.
\end{align*}
We need to be a bit careful here, and distinguish the case $k=1$ from $k\ge 2$, but the relevant details are left to the reader. Assuming that $k\ge 2$ we rewrite \eqref{I4} as follows.
\begin{align}\label{I5}
\mathscr I & = \frac{4^{\frac m2 + k - 1}2^{1-\frac k2}\G(\frac m2 + k - 1)}{2\ \G(\frac k2)\Phi^0(z)^{2(\frac m2 + k - 1)}} \int_0^\infty \left(1- \tanh^2 u\right)^{\frac m4-1} (\tanh^2 u)^{\frac{k-2}2}\ \frac{2\tanh u}{\cosh^2 u}
\\
& \times   F\left(\frac{1}{2}(\frac m2 + k - 1),\frac m4+ \frac k2;\frac k2;- \frac{16 \Psi^0(\sigma)^2 \tanh^2 u}{\Phi^0(z)^4}\right)\ du.
\notag
\end{align}
Performing the above stated change of variable in the integral in the right-hand side of \eqref{I5}, we finally reach the conclusion that
\begin{align}\label{I6}
\mathscr I & = \frac{4^{\frac m2 + k - 1}2^{1-\frac k2}\G(\frac m2 + k - 1)}{2\ \G(\frac k2)\Phi^0(z)^{2(\frac m2 + k - 1)}} 
\int_0^1 \left(1-y\right)^{\frac m4-1} y^{\frac{k}2-1}\ 
\\
& \times   F\left(\frac{1}{2}(\frac m2 + k - 1),\frac m4+ \frac k2;\frac k2;- \frac{16 \Psi^0(\sigma)^2 }{\Phi^0(z)^4}\ y\right)\ dy.
\notag
\end{align}
The form of the integral in the right-hand side of \eqref{I6} represents the third miracle in the proof of the theorem. To unravel it, we apply the following formula due to H. Bateman, see \cite[(2) on p. 78]{E}, which gives
\begin{equation}\label{hyperGint}
\int_0^1  (1-y)^{\gamma-c-1} y^{c-1} F(\alpha,\beta;c;a y) dy = \frac{\G(c)\G(\gamma - c)}{\G(\gamma)} F(\alpha,\beta;\gamma;a),
\end{equation}
provided that 
\[
\Re \gamma >\Re c>0,\ \ \ \ \  a\not=1,\ \ \ \ \  |\arg(1-a)|<\pi.
\] 
Applying \eqref{hyperGint} with 
\[
\gamma = \frac m4 + \frac k2,\ \ \  c = \frac k2,\ \ \  \alpha = \frac{1}{2}(\frac m2 + k - 1),\ \ \ \beta = \frac m4+ \frac k2,\ \ \ \ a = - \frac{16 \Psi^0(\sigma)^2 }{\Phi^0(z)^4},
\]
we finally obtain
\begin{align}\label{I6}
\mathscr I & = \frac{4^{\frac m2 + k - 1}2^{-\frac k2}\G(\frac m2 + k - 1)\G(\frac m4)}{\G(\frac m4 + \frac k2)\Phi^0(z)^{2(\frac m2 + k - 1)}} 
 \ F\left(\frac{1}{2}(\frac m2 + k - 1),\frac m4+ \frac k2;\frac m4+ \frac k2;- \frac{16 \Psi^0(\sigma)^2 }{\Phi^0(z)^4}\right).
\end{align}
If we now use the following elementary, yet important property, which can be directly verified from \eqref{F}, see also \cite[(4) on p. 101]{E},
\begin{equation}\label{fs6}
F(\alpha,\beta;\beta;-a)  = (1+a)^{-\alpha},
\end{equation}
we reach the remarkable conclusion that
\begin{equation}\label{finalfcktr}
F\left(\frac{1}{2}(\frac m2 + k - 1),\frac m4+ \frac k2;\frac m4+ \frac k2;- \frac{16 \Psi^0(\sigma)^2 }{\Phi^0(z)^4}\right) = \frac{\Phi^0(z)^{2(\frac m2 + k - 1)}}{(\Phi^0(z)^4 + 16 \Psi^0(\sigma)^2)^{\frac 12(\frac m2 + k - 1)}}.
\end{equation}
Substituting \eqref{finalfcktr} in \eqref{I6}, we have
\begin{equation}\label{I7}
\mathscr I = \frac{4^{\frac m2 + k - 1}2^{-\frac k2}\G(\frac m4) \G(\frac m2 + k - 1)}{\G(\frac m4 + \frac k2)}  (\Phi^0(z)^4 + 16 \Psi^0(\sigma)^2)^{-\frac 12(\frac m2 + k - 1)}.
\end{equation}
To more conveniently represent the constant in the right-hand side of \eqref{I7}, we now use \eqref{prod} 
in which we take $x = \frac 12(\frac m2 + k -1)$, to find
\[
\frac{\G(\frac m2 + k -1)}{\G(\frac m4 + \frac k2)} = \pi^{-1/2}  2^{\frac m2 + k -2} \G(\frac 12(\frac m2 + k -1)).
\]
Substituting in \eqref{I7}, and keeping \eqref{theta01} and \eqref{Q2} in mind, we obtain
\begin{equation}\label{I8}
\mathscr I = \pi^{-1/2}  2^{\frac 3{2}m +\frac{5}{2}k -4} \G(\frac m4) \G(\frac 12(\frac m2 + k -1))\  \Theta^0(z,\sigma)^{-(Q-2)}.
\end{equation}
If we now remember that we need to multiply the right-hand side of \eqref{I8} by the constant in \eqref{co}, we finally reach the conclusion that 
\begin{equation}\label{I9}
\int_0^\infty \Gi\ dt = \frac{\sigma_{m-1}\sigma_{k-1}}{\sigma_\Phi\sigma_\Psi} \frac{2^{\frac m2 +2k-4} \G(\frac m4) \G(\frac 12(\frac m2 + k -1))}{\pi^{\frac{m+k+1}2}} \ \Theta^0(z,\sigma)^{-(Q-2)}.
\end{equation}
In view of \eqref{C} in Proposition \ref{P:const}, the equation \eqref{I9} finally proves \eqref{beauty}. 

\end{proof}



\bibliographystyle{amsplain}

\begin{thebibliography}{10}

\bibitem{AIS}
G. Akagi, K. Ishige \& R. Sato, \emph{The Cauchy problem for the Finsler heat equation}, 
Adv. Calc. Var. 13 (2020), no. 3, 257-278.

\bibitem{BG}
F. Baudoin \& N. Garofalo, \emph{Curvature-dimension inequalities and Ricci lower bounds for sub-Riemannian manifolds with transverse symmetries}.
J. Eur. Math. Soc. (JEMS) 19 (2017), no. 1, 151-219.


\bibitem{BP}
G. Bellettini \& M. Paolini, \emph{Anisotropic motion by mean curvature in the context of Finsler geometry}.
Hokkaido Math. J. 25~(1996), no. 3, 537-566.

\bibitem{CCR}
L. Capogna, G. Citti, Giovanna \& G. Rea, 
\emph{A subelliptic analogue of Aronson-Serrin's Harnack inequality}.
Math. Ann. 357(2013), no.3, 1175-1198.

\bibitem{CF}
X. Cheng \& Y. Feng, \emph{Harnack inequality and the relevant theorems on Finsler metric measure manifolds}. ArXiv:2312.06404
 
\bibitem{CS}
A. Cianchi \& P. Salani, \emph{Overdetermined anisotropic elliptic problems}.
Math. Ann. 345~(2009), no. 4, 859-881.





\bibitem{DGGS}
F. Dragoni, N. Garofalo, G. Giovannardi \& P. Salani, \emph{A fundamental solution for a subelliptic operator in Finsler geometry}. Preprint 2023.

\bibitem{E}
A. Erd\'elyi, W. Magnus, F. Oberhettinger \& F. G. Tricomi, \emph{Higher transcendental functions}. Vol. I. Based on notes left by Harry Bateman. With a preface by Mina Rees. With a foreword by E. C. Watson. Reprint of the 1953 original. Robert E. Krieger Publishing Co., Inc., Melbourne, Fla., 1981. xiii+302 pp. 


\bibitem{FK}
V. Ferone \& B. Kawohl, \emph{Remarks on a Finsler-Laplacian}.
Proc. Amer. Math. Soc. 137~(2009), no. 1, 247-253.

\bibitem{G}
N. Garofalo, \emph{Unique continuation for a class of elliptic operators which degenerate on a manifold of arbitrary codimension}, J. Differential Equations, 104~ (1993), no. 1, 117-146.




\bibitem{Gft}
N. Garofalo, \emph{Fractional thoughts}. New developments in the analysis of nonlocal operators, 1-135, Contemp. Math., 723, Amer. Math. Soc., Providence, RI, 2019.


\bibitem{GT}
N. Garofalo \& G. Tralli, \emph{Heat kernels for a class of hybrid evolution equations}, Potential Anal. 59~(2023), 823-856. 

\bibitem{Gri}
A. Grigor'yan, \emph{The heat equation on noncompact Riemannian manifolds}. Matem. Sbornik 182~(1991), 55-87; Math. USSR-Sbornik 72~(1992), 47-77. 

\bibitem{Le}
N. N. Lebedev, \emph{Special functions and their applications}. Revised edition, translated from the Russian and edited by R. A. Silverman. Unabridged and corrected republication. Dover Publications, Inc., New York, 1972. xii+308 pp.


\bibitem{LY}
P. Li \& S.-T. Yau, \emph{On the parabolic kernel of the Schr\"odinger operator}.
Acta Math. 156~(1986), no. 3-4, 153-201.

\bibitem{MS}
B. Muckenhoupt \& E. M. Stein, \emph{Classical expansions and their relation to conjugate harmonic functions}. 
Trans. Amer. Math. Soc. 118~(1965), 17-92. 

\bibitem{OScpam}
S.-I. Ohta \& K.-T. Sturm, \emph{Heat flow on Finsler manifolds}.
Comm. Pure Appl. Math. 62 (2009), no. 10, 1386-1433.

\bibitem{OSaim}
S.-I. Ohta \& K.-T. Sturm, \emph{Bochner-Weitzenb\"ock formula and Li-Yau estimates on Finsler manifolds}.
Adv. Math. 252 (2014), 429-448.

\bibitem{SC}
L. Saloff-Coste, \emph{Uniformly elliptic operators on Riemannian manifolds}. J. Diff. Geom. 36~(1992), 417-450. 

\bibitem{Sc}
R. Schneider, \emph{Convex bodies: the Brunn-Minkowski theory}.
Encyclopedia Math. Appl., 151
Cambridge University Press, Cambridge, 2014, xxii+736 pp.

\bibitem{Tay}
J. E. Taylor, \emph{Crystalline variational problems}.
Bull. Amer. Math. Soc. 84 (1978), no. 4, 568-588.


\bibitem{Wa}
G. N. Watson, \emph{A treatise on the theory of Bessel functions}. Reprint of the second (1944) edition. Cambridge Mathematical Library. Cambridge University Press, Cambridge, 1995. viii+804 pp. 








\end{thebibliography}

\end{document}